\crefname{hypothesis}{Hypothesis}{Hypotheses}
\crefname{fact}{Fact}{Facts}
\title{
\href{http://orion.math.uwaterloo.ca/~hwolkowi/henry/reports/ABSTRACTS.html}{
Exact Solutions for the NP-hard Wasserstein Barycenter Problem
using a Doubly Nonnegative Relaxation and a Splitting Method
}
%    \footnote{
% Emails resp.: sghadimi@uwaterloo.ca, w2jung@uwaterloo.ca,
% a3sujana@uwaterloo.ca, david.torregrosa@ua.es,
% hwolkowicz@uwaterloo.ca}
   % \footnote{
   % This report is available at URL:
   % \href{https://www.math.uwaterloo.ca/\~hwolkowi/henry/reports/ABSTRACTS.html}{www.math.uwaterloo.ca/\~{ }hwolkowi/henry/reports/ABSTRACTS.html}
   % }
}
\author{
%\href{https://uwaterloo.ca/management-science-engineering/contacts/saeed-ghadimi}{Saeed
%Ghadimi}\thanks{Department of Management Science and Engineering, University of
%Waterloo, ON, Canada (\email{sghadimi@uwaterloo.ca}).}
%\and 
\href{https://uwaterloo.ca/combinatorics-and-optimization/about/people/group/50}{%
Woosuk L. Jung}\thanks{
\href{http://www.math.uwaterloo.ca/co/}{Department of Combinatorics and Optimization}, University of Waterloo,  ON, Canada (\email{w2jung@uwaterloo.ca},  {hwolkowicz@uwaterloo.ca}).}
 \and
% \href{https://asujanani6.github.io/}
% {Arnesh Sujanani}\footnotemark[2]
% \and
%\href{https://cvnet.cpd.ua.es/curriculum-breve/es/torregrosa-belen-david/110216}{%
%David Torregrosa-Bel\'en}\thanks{
%\href{https://dmat.ua.es/en/}{Department of Mathematics}, University of Alicante, Alicante, Spain (\email{david.torregrosa@ua.es}).} \and
\href{https://www.math.uwaterloo.ca/~hwolkowi/}
{Henry Wolkowicz}\footnotemark[2]
}
\DeclareMathOperator{\diag}{diag}
\definecolor{OliveGreen}{rgb}{0,0.6,0}
\definecolor{tempblue}{RGB}{36, 56, 231 }
\definecolor{tablegray}{RGB}{215, 219, 221 }
\numberwithin{equation}{section}  % enables section numbering
\numberwithin{table}{section}
\numberwithin{figure}{section}
\pgfplotsset{compat=1.18}
\def\R{\mathbb{R}}
\def\Z{\mathbb{Z}}
\def\Zp{\Z_+}
\def\Sc{\mathbb{S}}
\def\Sn{\Sc^n}
\def\Snp{\Sc_+^n}
\def\Sp{\Sc_+}
\def\Snpp{\Sc_{++}^n}
\def\Rd{\mathbb{R}^d}
\def\Rn{\mathbb{R}^n}
\def\Rnp{\mathbb{R}_+^n}
\def\Rnpp{\mathbb{R}_{++}^n}
\def\normF#1{\|#1\|_F}
\def\PSD{\mbox{\boldmath$PSD$}\,}
\def\Snplusone{\mathbb{S}^{n+1}}
\def\Sknplusone{\mathbb{S}^{kn+1}}
\def\Rnplusone{\mathbb{R}^{n+1}}
\def\Skno{\Sc^{kn+1}}
\def\Sknop{\Sc_+^{kn+1}}
\newtheorem{example}[theorem]{Example}
\newtheorem{prop}[theorem]{Proposition}
\newtheorem{lem}[theorem]{Lemma}
\newtheorem{problem}[theorem]{Problem}
\newtheorem{cor}[theorem]{Corollary}
\crefname{thm}{Theorem}{Theorems}
\Crefname{thm}{Theorem}{Theorems}
\crefname{problem}{Problem}{Theorems}
\Crefname{problem}{Problem}{Theorems}
\Crefname{assump}{Assumption}{Theorems}
\crefname{assump}{Assumption}{Theorems}
\Crefname{linesrch}{LineSearch}{Theorems}
\crefname{linesrch}{LineSearch}{Theorems}
\crefname{conjecture}{Conjecture}{Theorems}
\Crefname{conjecture}{Conjecture}{Theorems}
\crefname{prop}{Proposition}{Propositions}
\Crefname{prop}{Proposition}{Propositions}
\crefname{cor}{Corollary}{Corollaries}
\Crefname{cor}{Corollary}{Corollaries}
\crefname{lem}{Lemma}{Lemmas}
\Crefname{lem}{Lemma}{Lemmas}
\crefname{lemma}{Lemma}{Lemmas}
\Crefname{lemma}{Lemma}{Lemmas}
\crefname{defn}{definition}{definitions}
\Crefname{defn}{Definition}{Definitions}
\crefname{conj}{Conjecture}{Conjectures}
\Crefname{conj}{Conjecture}{Conjectures}
\crefname{remark}{Remark}{Remarks}
\Crefname{remark}{Remark}{Remarks}
\crefname{rmk}{Remark}{Remarks}
\Crefname{rmk}{Remark}{Remarks}
\crefname{example}{Example}{Examples}
\Crefname{example}{Example}{Examples}
\crefname{algorithm}{Algorithm}{Algorithms}
\Crefname{algorithm}{Algorithm}{Algorithms}
\crefname{align}{}{}
\Crefname{align}{}{}
\crefname{equation}{}{}
\Crefname{equation}{}{}
\newcommand{\floor}[1]{\lfloor#1\rfloor}
\newcommand{\simplex}[1]{\Delta_{#1}}
\newcommand{\fancy}[1]{\mathcal{#1}}
\newcommand{\summ}[3]{\sum_{#1}^{#2}{#3}}
\newcommand{\BCQP}{\textbf{BCQP}\,}
\newcommand{\BCQPp}{\textbf{BCQP}}
\newcommand{\iprod}[2]{\langle#1, #2\rangle}
\newcommand{\NP}{\textbf{NP}\,}
\newcommand{\matrixOneTwo}[2]{\begin{bmatrix}
						#1 &#2
					  \end{bmatrix}}
\newcommand{\matrixThreeFour}[9]{
	\newcommand{\matrixThreeFourf}[3]{
		\begin{bmatrix}
			#1 & #2 & #3 & #4\\
			#5 & #6 & #7 & #8\\
			#9 & ##1 & ##2 & ##3
		 \end{bmatrix}
			}
	\matrixThreeFourf
					}
\newcommand{\matrixTwoOne}[2]{\begin{bmatrix}
						#1\\
						#2
					  \end{bmatrix}}
\newcommand{\matrixB}[4]{\begin{bmatrix}
						#1 & #2\\
						#3 & #4
					  \end{bmatrix}}
\newcommand{\matrixC}[9]{\begin{bmatrix}
						#1 & #2 & #3\\
						#4 & #5 & #6\\
						#7 & #8 & #9
					  \end{bmatrix}}
\newcommand{\textdef}[1]{\textit{#1}\index{#1}}
\newcommand{\KK}{{\mathcal K} }
\newcommand{\cK}{{\mathcal K} }
\newcommand{\cZ}{{\mathcal Z} }
\newcommand{\LL}{{\mathcal L} }
\newcommand{\RR}{{\mathcal R} }
\newcommand{\YY}{{\mathcal Y} }
\newcommand{\cY}{{\mathcal Y} }
\newcommand{\GG}{{\mathcal G} }
\newcommand{\cJ}{{\mathcal J} }
\newcommand{\cO}{{\mathcal O} }
\newcommand{\PP}{{\mathcal P} }
\newcommand{\Nn}{{\NN}^n}
\newcommand{\SC}{\mathcal{S}^n_C}
\newcommand{\Sh}{{\mathcal S}^n_H}
\newcommand{\cL}{{\mathcal L} }
\newcommand{\cG}{{\mathcal G} }
\newcommand{\sADMM}{\textbf{sADMM}\,} % changed by TK
\newcommand{\sADMMp}{\textbf{sADMM}}
\newcommand{\EDM}{\textbf{EDM}\,}
\newcommand{\EDMp}{\textbf{EDM}}
\newcommand{\WBP}{\textbf{WBP}\,}
\newcommand{\WBPp}{\textbf{WBP}}
\newcommand{\ADMM}{\textbf{ADMM}\,}
\newcommand{\ADMMp}{\textbf{ADMM}}
\newcommand{\DNN}{\textbf{DNN}\,}
\newcommand{\DNNp}{\textbf{DNN}}
\newcommand{\KKT}{\textbf{KKT}\,}
\def\SDP{\mbox{SDP}\,}
\def\SDPp{\mbox{SDP}}
\newcommand{\LP}{\textbf{LP}\,}
\newcommand{\FR}{\textbf{FR}\,}
\newcommand{\FRp}{\textbf{FR}}
\newcommand{\cR}{{\mathcal R}}
\newcommand{\NN}{{\mathcal{N}}}
\newcommand{\A}{{\mathcal A}}
\newcommand{\bbm}{\begin{bmatrix}}
\newcommand{\ebm}{\end{bmatrix}}
\newcommand{\bem}{\begin{pmatrix}}
\newcommand{\eem}{\end{pmatrix}}
\newcommand{\beq}{\begin{equation}}
\newcommand{\beqs}{\begin{equation*}}
\newcommand{\bet}{\begin{table}}
\newcommand{\eeq}{\end{equation}}
\newcommand{\eeqs}{\end{equation*}}
\newcommand{\beqr}{\begin{eqnarray}}
\DeclareMathOperator{\BlkDiag}{{BlkDiag}}
\DeclareMathOperator{\arrowz}{{arrow_0}}
\DeclareMathOperator{\nul}{null}
\DeclareMathOperator{\range}{range}
\DeclareMathOperator{\blkdiag}{{blkdiag}}
\DeclareMathOperator{\Diag}{{Diag}}
\newcommand{\nc}{\newcommand}
\nc{\arrow}{{\rm arrow\,}}
\nc{\Arrow}{{\rm Arrow\,}}
\nc{\BoDiag}{{\rm B^0Diag\,}}
\nc{\bodiag}{{\rm b^0diag\,}}
\nc{\Mm}{{\mathcal M}^{m} }
\nc{\Mmn}{{\mathcal M}^{mn} }
\nc{\Mnr}{{\mathcal M}_{nr} }
\nc{\Mnmr}{{\mathcal M}_{(n-1)r} }
\nc{\kwqqp}{Q{$^2$}P\,}
\nc{\kwqqps}{Q{$^2$}Ps}
\nc{\notinaho}{(X,S)\in \overline{AHO}(\A)}
\nc{\inaho}{(X,S)\in AHO(\A)}
\newcommand{\bea}{\begin{eqnarray}}%
\newcommand{\eea}{\end{eqnarray}}%
\newcommand{\beas}{\begin{eqnarray*}}%
\newcommand{\eeas}{\end{eqnarray*}}%
\newcommand{\Int}{{\rm int\,}}% interior of a set
\newcommand{\Hnp}[1][]{\,\mathbb{H}_+^{\ifthenelse{\equal{#1}{}}{n}{#1}}}
\newcommand{\Hn}[1][]{\,\mathbb{H}^{\ifthenelse{\equal{#1}{}}{n}{#1}}}
\newcommand{\Hk}[1][]{\,\mathbb{H}^{\ifthenelse{\equal{#1}{}}{k}{#1}}}
\newcommand{\Dn}[1][]{\,\mathbb{D}^{\ifthenelse{\equal{#1}{}}{n}{#1}}}
\newcommand{\eig}{\text{eig}}
\DeclareMathOperator*{\argmin}{argmin}
\title{
\href{http://orion.math.uwaterloo.ca/~hwolkowi/henry/reports/ABSTRACTS.html}{
Exact Solutions for the NP-hard Wasserstein Barycenter Problem
using a Doubly Nonnegative Relaxation and a Splitting Method}
   \footnote{
Emails resp.:  w2jung@uwaterloo.ca, hwolkowicz@uwaterloo.ca}
   % \footnote{
   % This report is available at URL:
   % \href{https://www.math.uwaterloo.ca/\~hwolkowi/henry/reports/ABSTRACTS.html}{www.math.uwaterloo.ca/\~{ }hwolkowi/henry/reports/ABSTRACTS.html}
   % }
}
\author{
\href{https://uwaterloo.ca/combinatorics-and-optimization/about/people/group/50}{%
Woosuk L. Jung}\thanks{
\href{http://www.math.uwaterloo.ca/co/}{Department of Combinatorics and Optimization}, University of Waterloo,  ON, Canada}
 \and
\href{https://www.math.uwaterloo.ca/~hwolkowi/}
{Henry Wolkowicz}\footnotemark[3]
}
\date{
Revising as of \today, \currenttime
}
\begin{document}
\maketitle
\tableofcontents
\nolinenumbers

{\bf Key words and phrases:}
Wasserstein barycenters, semidefinite programming, facial reduction,
cheapest hub problem

{\bf AMS subject classifications:} 
 90C26, 65K10, 90C27, 90C22

%\tableofcontents
%
%
%\listofalgorithms
%\listoftables
%\listoffigures

\begin{abstract}

The so-called
\emph{simplified} Wasserstein barycenter problem, also known as the
cheapest hub problem, consists in selecting one
point from each of $k$  given sets, each set consisting of $n$ points, 
with the aim of
minimizing  the sum of distances to the barycenter of the $k$ chosen points.
This problem is known to be NP-hard. 
We compute the Wasserstein barycenter by exploiting the
Euclidean distance matrix structure to obtain a facially reduced doubly
nonnegative, \DNNp, relaxation. The facial reduction provides a
natural splitting for applying the symmetric alternating directions 
method of multipliers (\sADMM) to the \DNN relaxation. The \sADMM
method exploits structure in the subproblems to find
strong upper and lower bounds.
In addition, we extend the problem to allow varying $n_j$ points for
the $j$-th set.

The purpose of this paper is twofold. First we want
to illustrate the strength of this \DNN relaxation with the natural 
splitting approach mentioned above. 
Our numerical tests then illustrate the
surprising success on random problems, as we generally,
efficiently, find the provable exact solution of this NP-hard problem.
Comparisons with current commercial software illustrate this surprising
efficiency.  However, we demonstrate and prove that there is a duality gap 
for problems with \emph{enough} multiple optimal solutions, and that
this arises from problems with highly symmetrized structure.
\end{abstract}

\section{Introduction}
\label{sect:intro}
\index{\WBPp, simplified Wasserstein barycenter problem}

We consider the so-called \textdef{simplified Wasserstein barycenter,
\WBPp} problem 
of finding the optimal barycenter of $k$ points, where
exactly one point is chosen from $k$ sets of points, each
set consisting of $n$ points. This is a simplification of more general
problems of \textdef{optimal mass transportation} and the problems of
summarizing and combining probability measures that occurs
in  e.g.,~statistics and machine learning.  
In \cite[Def. 1.4]{MR4378594} this problem is called the
\textdef{cheapest-hub problem},   and further in \cite{MR4378594},
a reduction to \WBP is derived from the \textdef{$k$-clique problem} thus 
proving NP-hardness.\footnote{Recall that the 
\textdef{$k$-clique problem} is the problem of
finding $k$ vertices in a graph such  that each pair is close in
the sense of being \emph{adjacent}.}
Algorithms for \WBP with exponential dependence in $d$ are discussed
in~\cite[Sect. 1.3.1]{MR4378594}.\footnote{We discuss this further below
as the complexity of our algorithm does \emph{not} depend on $d$.}
There are many important applications in molecular
conformation e.g.,~\cite{BurkImWolk:20}, clustering~ \cite{docuC2},
supervised and unsupervised learning, etc.
For additional details on the theory and applications of optimal
transport theory see 
e.g.,~\cite{altschuler2020wasserstein,Panaretos_2019,NEURIPS2019Vanc},
\href{https://www.youtube.com/watch?v=k1CeOJdQQrc}{lecture link},
\href{https://alexhwilliams.info/itsneuronalblog/2020/10/09/optimal-transport/}{Introduction
to Transportation Problems link},
and the many references therein.

The purpose of this paper is twofold. First, we provide a successful
framework for handling quadratic hard discrete optimization problems; 
and second, we illustrate the surprising success when applied to
our specific \WBPp. 

We model our problem as a quadratic objective, quadratic constrained
$\{0,1\}$ discrete optimization problem, i.e.,~we
obtain a \emph{binary quadratic} model.
We then get a convex relaxation for
this hard discrete optimization problem by lifting
to the doubly nonnegative, \DNNp, cone, the cone of nonnegative elementwise,
positive semidefinite symmetric matrices.
Strict feasibility fails for the
relaxation, so we apply \textdef{facial reduction, \FRp}.
This results in many constraints becoming redundant and also
gives rise to a \emph{natural splitting} that can be exploited by
the symmetric alternating directions method of multipliers (\sADMM).
We exploit the structure, and include redundant constraints on the
subproblems of the splitting and on the dual variables. The \sADMM
algorithm allows for efficient upper and provable lower bounding techniques 
for the original hard \WBP problem. This helps the algorithm stop early.

Our Extensive tests on random problems are
surprisingly efficient and successful, 
i.e.,~the relaxation with the upper and lower bounding techniques
provide a provable optimal solution to the original hard \WBP for
\emph{surprisingly many} instances; essentially for all our randomly
generated instances. 
For example, to solve the original hard discrete optimization problem 
to optimality for our algorithm for a random
problem with $k=n=25$ in dimension $d=25$ took of the order of $10$
seconds. In contrast, using the well known solver
\href{https://www.gurobi.com}{\textsc{Gurobi}}, on three problems, 
with sizes $n = k =5,7,8$, respectively,  it
took approximately:
$2, 570, 46692$ seconds, respectively (using the same laptop for both
tests).
(Detailed numerics for our algorithm are provided below
\Cref{sect:numerics}.)

\label{dualitygap}
Though generically we are surprisingly successful in finding the exact
solutions to the NP-hard problem,
the \DNN relaxation can fail to find the exact solution
for problems with special structure, i.e.,~there can be a 
positive duality gap between the optimal value of the original hard problem and
the lower bound found from the \DNN relaxation.
In~\Cref{cor:dualgapunc} we include a constructive proof for a general
hard $\{0,1\}$ problem 
that a sufficient number of linearly independent
optimal solutions results in a duality gap between the original hard
problem and its \DNN relaxation. 
This is specialized to our NP-hard problem in~\Cref{cor:dualgapWass}.
A specific instance is included.
Note that we consider that we have an optimal solution to \WBP if the
upper and lower bounds are equal to machine precision as any other
feasible solution cannot have a smaller objective 
value within machine precision.

\subsection{Outline}
We continue in \Cref{sect:notation} with preliminary notation.
The main NP-hard~\Cref{prob:mainWB} and connections to Euclidean
distance matrices, \EDMp, are given in~\Cref{sect:simpWB}.
A regularized, facially reduced, doubly nonnegative, \DNNp, 
relaxation is derived
in~\Cref{sect:relaxprob}. 
The \FR in the relaxation fits \emph{naturally} with applying a
splitting approach. This is presented in~\Cref{sect:ADMMalg} along with
special bounding techniques and heuristics 
on the dual multipliers for accelerating the splitting algorithm.
The algorithm provides \emph{provable} 
lower and upper bounds for the
original NP-hard~\Cref{prob:mainWB} that are attained by dual and primal
feasible solutions, respectively. Thus a \emph{zero} gap
(called a duality gap) proves optimality.
Our empirics are given in~\Cref{sect:numerics}.

In~\Cref{sect:multoptgaps} we prove that multiple optimal solutions 
can lead to duality gaps. We include specific instances.
Our concluding remarks are in~\Cref{sect:concl}.

\subsection{Notation}
\label{sect:notation}
We let \textdef{$S \in \Sn$} denote a matrix in the space of 
$n\times n$ symmetric matrices equipped with the \textdef{trace inner
product} $\langle S,T\rangle = \trace ST$; 
we use \textdef{$\diag(S)\in \Rn$} to denote the linear mapping to
the diagonal of $S$; the adjoint mapping is
\textdef{$\diag^*(v)=\Diag(v)\in \Sn$}.
We let \textdef{$[k] = \{1,2,\ldots,k\}$}.

The convex cone of positive semidefinite matrices is denoted $\Snp \subset
\Sn$, and we use $X\succeq 0$ for $X\in \Snp$. Similarly, for positive
definite matrices we use $\Snpp, X\succ 0$. We let $\Nn$ denote $n\times
n$ nonnegative symmetric matrices.  The cone of doubly nonnegative matrices is 
$\DNN = \Snp \cap \Nn$.
\index{\DNNp, doubly nonnegative}
\index{doubly nonnegative, \DNNp}

For a set of points $p_i\in \R^d$, we let $P=\begin{bmatrix} p_1 &
p_2 & \ldots & p_t \end{bmatrix}^T\in \R^{t\times d}$ denote
the \textdef{configuration matrix}. Here $d$ is the
\textdef{embedding dimension}. Without loss of
generality, we can  assume the points span $\Rd$, and we can
translate the points and assume they are centered,
i.e.,
\index{vector of ones, $e$}
\index{$0$-th unit vector, $e_0$}
\[
P^Te_t = 0.\footnote{
	The translation is given by
	\[
	P^T \mapsto P^T-ve^T,
	\]
	where $v:= \frac{1}{n}P^Te$ is the barycenter of the points.}
\]
Here we let \textdef{$e_t$, vector of ones} of dimension $t$ and we use
$e$ if the dimension is clear. And we use \textdef{$e_0$, $0$-th unit
vector} of appropriate size.  We denote the corresponding
\textdef{Gram matrix, $G=PP^T$}. Then the classical result of
Schoenberg~\cite[Sect. 3]{MR1501980}, see e.g.~\cite{alfm18}, relates a
\textdef{Euclidean distance matrix, \EDMp}, with a Gram matrix by applying
\[
\textdef{$D = \KK(G) = \diag(G)e^T+e\diag(G)^T-2G$}.
\]
Moreover, this mapping is one-one and onto between the $t(n-1)$
dimensinal
\textdef{centered subspace, $\SC$} and \textdef{hollow subspace, $\Sh$}
\index{$\SC$, centered} 
\index{$\Sh$, hollow} 
\[
\SC = \{X\in \Sn \,:\, Xe = 0\}, \quad
\Sh = \{X\in \Sn \,:\, \diag X = 0\}; \quad \cK(\SC) = \Sh.
\]
Here we denote the \textdef{triangular number, $t(k) = k(k+1)/2$}.
Note that the centered assumption $P^Te=0 \iff G=PP^T\in \SC$.
Further notation is introduced as needed.

\index{\EDMp, Euclidean distance matrix}
	
\section{Simplified Wasserstein barycenters, \WBPp}
\label{sect:simpWB}
We now present the main problem and the connections to Euclidean distance
matrices, \EDMp. We follow the notation in \cite[Sect. 1.2]{MR4378594}
and refer to our main problem as the simplified Wasserstein barycenter
problem, or Wasserstein barycenter for short.

\subsection{Main problem and \EDM connection}
Our main optimization problem is to find a point in each of $k$ sets
to obtain an optimal barycenter. We can think of this as finding a
\textdef{hub of hubs}. That is, suppose that there are $k$ areas with $n$
airports in each area.\footnote{We extend the prolem to allow for
different sizes for the sets.} We want to choose exactly one airport to act
as a minor hub in each of
the $k$ areas so that the barycenter for these $k$ minor hubs would
serve as a major (best) hub for the $k$ minor hubs. 
\index{cheapest-hub problem}.
\begin{problem}[{\textdef{simplified Wasserstein barycenter, \WBPp}}]
\label{prob:mainWB}
	Suppose that we are given a finite number of sets $S_1,...,S_k$, each 
	consisting of $n$ points in $\R^d$. Find the optimal barycenter point
	$y$ after choosing exactly one point from each set:
	\begin{equation}
		\label{eq:mainprob}
		\textdef{$p^*_W$} :=
		\min_{\stackrel{p_i\in S_i}{i\in[k]}} \min_{y\in\R^d}\summ{i\in [k]}{}{\norm{p_i-y}{}^2}
		=:\min_{\stackrel{p_{j_i}\in S_i}{i\in[k]}}
		\textdef{$F(p_{j_1},p_{j_2},\ldots,p_{j_k})$},
	\end{equation}
thus defining $F$. Here
\index{$P$, configuration matrix}
\index{configuration matrix, $P$}
\begin{equation}
	\label{e:defPDG}
P^T=\begin{bmatrix} p_1 & \ldots & p_N \end{bmatrix}\in \R^{d\times N}, D,G,
\end{equation}
 denote the 
corresponding (configuration) matrix of points, \EDMp, and Gram matrix,
respectively. We allow the set sizes, \textdef{$n_j, j\in [k]$}, 
to vary and let \textdef{$N=\sum_{j\in[k]} n_j$}.
\end{problem}
By~\Cref{lem:barycequival} below, the optimal Wasserstein barycenter is
the standard barycenter of the $k$ optimal points.
It is known \cite[Sect. 1.2]{MR4378594} that the problem can be 
phrased using inter-point squared distances. We include a proof to
emphasize the connection between Gram and Euclidean distance
matrices.\footnote{As noted earlier, This is called 
the \emph{cheapest-hub} problem in \cite[Sect. 1.2]{MR4378594}.}
We start by recording the following minimal property of the standard
barycenter with respect to sum of squared distances.
\begin{lemma}
\label{lem:barycequival}
Suppose that we are given $k$ points $q_i\in \Rd, i=1,\ldots k$. Let
$\bar y=\frac 1k\sum_{i=1}^k q_i$ denote the barycenter. Then
\[
\bar y = \argmin_y \sum_{i=1}^k \frac 12\|q_i-y\|^2.
\]
\end{lemma}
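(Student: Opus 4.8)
The plan is to observe that the objective $f(y) := \sum_{i=1}^k \tfrac12\|q_i-y\|^2$ is a strictly convex quadratic function of $y\in\Rd$, so it has a unique global minimizer characterized by the first-order stationarity condition $\nabla f(y)=0$. Computing the gradient term by term gives $\nabla f(y) = \sum_{i=1}^k (y-q_i) = ky - \sum_{i=1}^k q_i$, and setting this to zero yields $y = \tfrac1k\sum_{i=1}^k q_i = \bar y$. Strict convexity follows since the Hessian is $\nabla^2 f(y) = kI \succ 0$; this both guarantees that the stationary point is the unique minimizer and that no boundary/asymptotic analysis is needed (the function is coercive).

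Alternatively, and perhaps more in the spirit of the surrounding \EDM discussion, I would give the ``bias--variance'' identity: for any $y$,
\[
\sum_{i=1}^k \|q_i-y\|^2 = \sum_{i=1}^k \|q_i-\bar y\|^2 + k\,\|\bar y - y\|^2,
\]
which is verified by expanding $\|q_i-y\|^2 = \|q_i-\bar y\|^2 + 2\langle q_i-\bar y, \bar y - y\rangle + \|\bar y - y\|^2$ and noting that $\sum_{i=1}^k (q_i-\bar y) = 0$ kills the cross term. The right-hand side is minimized precisely when $\|\bar y - y\|^2 = 0$, i.e. $y=\bar y$, and this also exhibits the optimal value as (half of) $\sum_i \|q_i - \bar y\|^2$, a quantity that can be rewritten in terms of pairwise squared distances — exactly the connection the paper wants to emphasize before passing to the \EDM formulation.

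There is essentially no obstacle here: the only thing to be mildly careful about is the bookkeeping of the factor $\tfrac12$ and confirming that $\sum_{i=1}^k(q_i-\bar y)=0$, which is immediate from the definition $\bar y = \tfrac1k\sum_i q_i$. I would present the second (completion-of-square) argument as the main proof since it is self-contained, gives uniqueness of the argmin for free, and foreshadows the interpoint-distance reformulation referenced in the next sentence of the paper.
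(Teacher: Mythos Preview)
Your proposal is correct. The paper's own proof is exactly your first approach, stated in a single line: ``The result follows from the stationary point equation $\sum_{i=1}^k (q_i -\bar y) = 0$.'' It does not bother to record the strict convexity or Hessian argument that you add; you are simply being more careful.

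Your preferred route, the completion-of-square identity
\[
\sum_{i=1}^k \|q_i-y\|^2 = \sum_{i=1}^k \|q_i-\bar y\|^2 + k\,\|\bar y - y\|^2,
\]
is a genuinely different (and slightly more informative) argument: it avoids calculus, delivers uniqueness of the minimizer for free, and displays the optimal value explicitly. In fact this is precisely the computation the paper carries out in the \emph{next} lemma (via $\trace(JG_QJ)$ with $J=I-ee^T/k$) to get $\sum_i\|q_i-y\|^2=\tfrac{1}{2k}e^TD_Qe$, so your alternative proof effectively merges the two lemmas. Either argument is perfectly adequate here; the paper just opts for the shortest possible justification.
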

\begin{proof}
The result follows from the stationary point equation
$\sum_{i=1}^k (q_i -\bar y) = 0$.
\end{proof}
We  now have the following useful lemma.
\begin{lem}
Let $Q^T=[q_1 \ldots q_k]\in \R^{d\times k}$ and let $G_Q$ and $D_Q$ be, respectively, the Gram 
	and the EDM matrices corresponding to the columns in $Q^T$. Further, 
	let $y = \frac 1k Q^Te$ be the barycenter. Then 
	\begin{equation}
		\label{eq:GDgenerali}
	2k \sum_{i=1}^k ||q_i - y||^2 = 
e^T D_Q e = 2 k \trace (G_Q) - 2 e^TG_Qe.
   \end{equation}
\end{lem}

\begin{proof}
Let $J= I - ee^T/k$ be the
orthogonal projection onto $e^\perp$. Hence, $J^2=J^T=J$. Moreover, the
$i$-th row $(JQ)_i = (Q-\frac 1k ee^TQ)_i=(q_i-y)^T$. Now   
	\[
	\sum_{i=1}^k ||q_i - y||^2  = \trace  ( J Q Q^T J ) = \trace  (
J G_Q ) 
	= \trace (G_Q) - \frac{1}{k} e^TG_Qe. 
	\] 
	But $D_Q = \cK (G_Q) = e \diag(G_Q)^T + \diag(G_Q) e^T - 2 G_Q$.
Therefore, $e^T D_Q e = 2 k \trace (G_Q) - 2 e^TG_Qe$.
\end{proof}

The following~\Cref{prop:equivprobs} illustrates the connection between
the simple Wasserstein barycenter problem\footnote{We refer to this as the
Wasserstein barycenter problem.}
of finding the optimal barycenter and the
$k$-clique problem of finding $k$ pairwise adjacent vertices.
\begin{corollary}
\label{prop:equivprobs}
Let $N=\sum_{j\in[k]} n_j$, and let 
$ S_j=\{p_{j_i}\}_{i=1}^{n_j}, j\in [k]$ be given.
Consider the main problem~\cref{eq:mainprob} and let $y$ denote the 
optimal Wasserstein barycenter. The problem of finding $y$
 is equivalent to finding 
exactly one point in each set that minimizes the sum of squared distances
in  the following:
	\begin{equation}
		\label{eq:mainprobl}
		(\textdef{WIQP}) \qquad
2Np_W^*=p^*:=
\min_{\stackrel{p_t \in S_t}{t\in [k]}}\summ{i,j\in [k]}{}{\norm{p_i-p_j}{}^2}.
\end{equation}
\end{corollary}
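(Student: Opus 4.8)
The plan is to read the statement off directly from the two preceding lemmas, with essentially no new computation. First I would fix an arbitrary admissible selection $p_{j_i}\in S_i$, $i\in[k]$, write $Q^T=[\,p_{j_1}\ \ldots\ p_{j_k}\,]\in\R^{d\times k}$ for the corresponding configuration matrix, and let $\bar y=\tfrac1k Q^Te$ be its barycenter. By \Cref{lem:barycequival} the inner minimization over $y$ in \cref{eq:mainprob} is attained at $\bar y$ (the factor $\tfrac12$ there being irrelevant to the minimizer), so that
\[
F(p_{j_1},\dots,p_{j_k})=\min_{y\in\R^d}\sum_{i\in[k]}\|p_{j_i}-y\|^2=\sum_{i\in[k]}\|p_{j_i}-\bar y\|^2 .
\]

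Next I would apply \cref{lem:GDgeneralii} to the $k$ columns of $Q^T$: with $D_Q$ the EDM of those columns, $\sum_{i\in[k]}\|p_{j_i}-\bar y\|^2=\tfrac{1}{2k}\,e^TD_Qe$. Since $(D_Q)_{i\ell}=\|p_{j_i}-p_{j_\ell}\|^2$ and $D_Q$ is hollow, the scalar $e^TD_Qe$ is exactly the double sum $\sum_{i,\ell\in[k]}\|p_{j_i}-p_{j_\ell}\|^2$ appearing in $(WIQP)$. Combining the two displays shows that, for every admissible selection,
\[
F(p_{j_1},\dots,p_{j_k})=\frac{1}{2k}\sum_{i,\ell\in[k]}\|p_{j_i}-p_{j_\ell}\|^2 .
\]
Because the two sides agree for each of the finitely many selections, taking the minimum over all admissible selections yields $p_W^*=\tfrac{1}{2k}\,p^*$, which is precisely the equivalence of \cref{eq:mainprob} with $(WIQP)$ stated in \cref{eq:mainprobl} (the proportionality constant being $2k$, the number of selected points). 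Since $F(\cdot)$ is a fixed positive multiple of the $(WIQP)$ objective, a selection is optimal for one problem exactly when it is optimal for the other, and by \Cref{lem:barycequival} the optimal Wasserstein barycenter $y$ is the ordinary barycenter of the $k$ minimizing points. The extension to varying set sizes $n_j$ requires no change, since one still chooses exactly $k$ points.

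I do not anticipate any real obstacle: the corollary is an immediate consequence of the distance-sum identity \cref{lem:GDgeneralii} together with the minimality of the barycenter in \Cref{lem:barycequival}. The only care needed is purely in the bookkeeping — confirming that $e^TD_Qe=\sum_{i,\ell\in[k]}\|p_{j_i}-p_{j_\ell}\|^2$ carries the correct scalar, and noting that $\min_{\mathrm{sel}}\min_y=\min_{\mathrm{sel}}F$ holds by the very definition of $F$ in \cref{eq:mainprob}.
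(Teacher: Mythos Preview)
Your proof is correct and follows essentially the same route as the paper: both invoke \Cref{lem:barycequival} to reduce the inner minimization to the ordinary barycenter and then the EDM identity of the preceding lemma to rewrite $\sum_i\|p_i-\bar y\|^2$ as $\tfrac{1}{2k}e^TD_Qe$; the paper does this via \cref{lem:GDgenerali} after first translating so that $Ge=0$, whereas you apply \cref{lem:GDgeneralii} directly to an arbitrary selection and only then minimize, which is slightly cleaner and makes the equivalence of minimizers explicit. Your constant $2k$ is the correct one (the paper's own index entry records $p^*=2kp_W^*$); the $2N$ with $N=\sum_j n_j$ in the displayed statement and proof appears to be a slip, since only $k$ points are ever selected.
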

\index{$p^*=2kp_W^*$}
\begin{proof}
Suppose that $P^T=\begin{bmatrix} p_1 & \ldots & p_k \end{bmatrix}\in
\R^{d\times k}$ is a
fixed matrix of optimal solution vectors to  \cref{eq:mainprob}, 
and let $y$ be the
barycenter. Without loss of generality, since distances do not 
change after a translation, we translate all the
points $p_t, t\in[k]$, by $y$ and obtain $y=0$. 
This implies that the corresponding Gram matrix  $Ge=PP^Te=0$.
This combined with \cref{eq:mainprob} and \cref{eq:GDgenerali} and the
corresponding distance matrix $D$ yield 
\begin{equation}
\label{eq:eDe}
\begin{array}{rcl}
\summ{i,j\in [k]}{}{\norm{p_i-p_j}{}^2}
&=& 
e^TDe 
\\&=& 
e^T(\diag(G)e^T+e\diag(G)^T-2G)e
\\&=& 
2N\diag(G)^Te
\\&=& 
2N\trace G
\\&=& 
2N\sum_{i\in[k]} \|p_i\|^2
\\&=& 2Np_W^*,
\end{array}
\end{equation}
where the last equality follows from~\Cref{lem:barycequival}. 
\end{proof}

%%%%%%%%%%%%%%%%%%%%%%%%%%%%%%%%%%%%%%%%%%%%%%%%%%%%%%%%
\subsection{A reformulation using a Euclidean distance matrix}
In this paper we work with the optimal value
$p^*$ and now provide a reformulation of
\cref{eq:mainprobl} using an \EDMp.
Define 
\begin{equation}
\label{eq:xAdef}
x:=\begin{pmatrix} v_1 \cr \vdots \cr v_k\end{pmatrix}\in \R^{N}, 
\, v_i \in \R^{n_i},
\quad 
\textdef{$A:=\blkdiag(e_{n_1}^T,...,e_{n_k}^T)\in \R^{k\times N}$}.
\index{Kronecker product, $\otimes$}
\index{$\otimes$, Kronecker product}
\end{equation}
And we note that $A= I\otimes e^T \in\R^{k\times kn}$, if $n_i=n, \forall i$,
where we denote \textdef{Kronecker product, $\otimes$}.
Note that we get $A^Te = e$.
	Then, the constraints of picking exactly one point from each set
can be recast as:
\begin{equation}
\label{eq:1each}
Ax=e, \, x \in \{0,1\}^N.
\end{equation} 
	
Recalling \Cref{prop:equivprobs} and \cref{eq:eDe} in the proof,
we see that  \cref{eq:mainprob} can be formulated as  a
binary-constrained quadratic program (\textdef{\BCQP}) using the Euclidean
distance matrix $D$ formed from all $N$ points $P^T=\begin{bmatrix}
p_{1} & \ldots & p_{n_1} &
p_{n_1+1} & \ldots & p_{n_1+n_2} &
\ldots & 
& \ldots & p_{N}
\end{bmatrix}$:
\begin{equation}
\label{eq:BCQP}
(\BCQP)\qquad \begin{array}{rcl}
\textdef{$p^*$} = 
&   \min & x^TDx=\iprod{D}{xx^T} \\
& \text{s.t.} & Ax=e   \\
&&	x\in \{0,1\}^{N}.
\end{array}
\end{equation}
For simplicity in the sequel we often assume that the cardinality of all
sets are equal. 
%%%%%%%%%%%%%%%%%%%%%%%%%%%%%%%%%%%%%%%%%%%%%%%%%%%%%%%%
%%%%%%%%%%%%%%%%%%%%%%%%%%%%%%%%%%%%%%%%%%%%%%%%%%%%%%%%

\begin{remark}[difficulty of the Wasserstein barycenter problem]
We first note that $A$ in \cref{eq:xAdef} is \textdef{totally unimodular}, 
i.e.,~every square submatrix has $\det(A_I)\in \{0,\pm 1\}$. Therefore, 
the basic feasible solutions, \textdef{vertices of the feasible set}, of
the LP relaxation $Ax=e, \, x\geq 0$, are $\{0,1\}$ variables.
Therefore, these discrete optimization problems with a 
linear objective yield vertices as  optimal
solutions and can be solved with simplex type methods while yielding
$\{0,1\}$ solutions.

For our problem we have a quadratic objective function.
And, by the properties of distance matrices,
it  is concave on the span of the feasible set of the LP relaxation.
Therefore, minima are attained at extreme points, i.e.,~at vertices,
at $\{0,1\}$ points. But solving the
concave minimization problem is a hard problem.
Note that the \emph{width} of $P$ is hidden 
in the rank when forming the Gram matrix $G=PP^T$ and so hidden in the
rank of the rank-two update \EDM
\[
D= \diag(G)e^T+e\diag(G)^T-2G, \quad \rank(D)\in 
[\rank(G),\rank(G)+2].
\]

In summary, the problem appears to be NP-hard due to the minimization of
a \underline{quadratic} function, \cite{Pard:91}, and the binary 
$0,1$ constraints. This is in contrast to the linear programming approaches
for the generalized transportation problems solved in
e.g.,~\cite{NEURIPS2019Vanc} and the references therein.
However, the properties of total unimodularity and concavity both
promote binary valued optimal points.
\end{remark}

%%%%%%%%%%%%%%%%%%%%%%%%%%%%%%%%%%%%%%%%%%%%%%%%%%%%%%%%%%%%%%%%%%%%%%%%%%%%%%%%%%%%%%%%%%%%%%%%%%%%%%%%%%%%%%%%%%%%%%%%%%%%%%%%%%%%%%%%%%%%%%%%%%%%%%%%%%%%%%%%%%%%%%%%
\section{Facially reduced \DNN relaxation}
\label{sect:relaxprob}
We now introduce a regularized convex relaxation to the hard binary quadratic
constrained problem introduced in~\cref{eq:BCQP}.
We start with the \SDP relaxation using a standard lifting approach.
We regularize using \textdef{facial reduction, \FRp}, and
include the so-called \emph{gangster constraint}.
We then strengthen this by including nonnegativity
constraints, i.e.,~we get the \textdef{doubly nonnegative, \DNNp}
relaxation. 

\index{\FRp, facial reduction}
\index{\DNNp, doubly nonnegative}

As stated above, for simplicity of notation we consider all sets to have
the same cardinality, $n_1=\ldots = n_k, N = \sum_j n_j$.

\subsection{Semidefinite programming (\SDPp) relaxation}
We begin with deriving an \SDP relaxation of our formulation in
\cref{eq:BCQP}. We start with a feasible vector $x\in \R^{kn}$ and set
$\begin{pmatrix} x_0 \cr x\end{pmatrix} =
\begin{pmatrix} 1 \cr x\end{pmatrix}$. We then
lift the vector to a rank-$1$ matrix 
$Y_x:= \begin{pmatrix} 1 \cr x\end{pmatrix}
\begin{pmatrix} 1 \cr x\end{pmatrix}^T$. 
The convex hull of the lifted vertices of the feasible set
of~\cref{eq:BCQP} yields an equivalent polyhedral set in $\Sc^{N+1}$.
It is difficult and expensive to find this polyhedral set.
To obtain a tractable convex relaxation,
we relax the implicit nonconvex rank-$1$ constraint on $Y_x$ and linearize 
the objective function. Let
\begin{equation}
\hat{D}:=\matrixB{0}{0}{0}{D}\in  \Sc^{kn+1}.
\end{equation}
The objective function of \cref{eq:BCQP} now becomes $\iprod{D}{xx^T}=\iprod{\hat{D}}{Y_x}$.
After the lifting, we impose the constraints that we have
from $x$ onto $Y$, e.g.,~these include
the $\{0,1\}$-constraints $x_i^2-x_i=0$, and the linear constraints $Ax=e$.
\index{zero-th unit vector, $e_0$}

\subsubsection{\SDP reformulation}
Define the linear transformation
\begin{equation*}
\textdef{$\arrow$}:\Snplusone\rightarrow\Rnplusone: \matrixB{s_0}{s^T}{s}{\bar{S}}\mapsto
\begin{pmatrix} s_0\cr \diag(\bar{S})-s\end{pmatrix}.
\end{equation*}
For convenience, we define
\begin{equation*}
\textdef{$\arrowz$}:\Snplusone\rightarrow\Rnplusone: \matrixB{s_0}{s^T}{s}{\bar{S}}\mapsto
\begin{pmatrix} 0\cr \diag(\bar{S})-s\end{pmatrix}.
\end{equation*}	

We now show that, as long as the rank-one condition holds, 
the binary constraint on vector $x$ is
equivalent to the $\arrow$ constraint on the lifted matrix, $\arrow(Y_x)=e_0$. 
\begin{prop} The following holds:
\[
\left\{Y\in\Skno_+: \rank(Y)=1, \arrow(Y)=e_0\right\} = \left\{Y=
\begin{pmatrix} 1\cr x\end{pmatrix}
\begin{pmatrix} 1\cr x\end{pmatrix}^T
: x\in\{0,1\}^{kn}\right\}.\]
\end{prop}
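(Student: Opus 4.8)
The plan is to prove the two set inclusions separately; both are elementary, relying only on the rank-one factorization of a positive semidefinite matrix and on the definition of $\arrow$. Throughout I write a matrix $Y\in\Skno_+$ in the block form $Y=\begin{pmatrix} s_0 & s^T\cr s & \bar S\end{pmatrix}$ matching the definition of $\arrow$, so that $\arrow(Y)=e_0$ is precisely the pair of conditions $s_0=1$ and $\diag(\bar S)-s=0$.

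For the inclusion $\supseteq$, I would start from $x\in\{0,1\}^{kn}$ and set $Y=\begin{pmatrix}1\cr x\end{pmatrix}\begin{pmatrix}1\cr x\end{pmatrix}^T$. Such a $Y$ is manifestly positive semidefinite of rank one, and reading off the blocks gives $s_0=1$, $s=x$, and $\bar S=xx^T$. Hence $\diag(\bar S)-s=(x_i^2-x_i)_i=0$, because each $x_i\in\{0,1\}$, so $\arrow(Y)=e_0$ and $Y$ lies in the left-hand set.

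For the inclusion $\subseteq$, take $Y\in\Skno_+$ with $\rank(Y)=1$ and $\arrow(Y)=e_0$. Since $Y\succeq0$ has rank one, it factors as $Y=zz^T$ with $z=\begin{pmatrix}z_0\cr w\end{pmatrix}$, $z_0\in\R$, $w\in\R^{kn}$. Comparing with the block form, $s_0=z_0^2$, $s=z_0w$, and $\bar S=ww^T$. The condition $s_0=1$ forces $z_0^2=1$; replacing $z$ by $-z$ (which does not change $Y=zz^T$) we may assume $z_0=1$, whence $s=w$ and $\diag(\bar S)-s=(w_i^2-w_i)_i$. The condition $\diag(\bar S)-s=0$ then gives $w_i\in\{0,1\}$ for every $i$, so $Y=\begin{pmatrix}1\cr w\end{pmatrix}\begin{pmatrix}1\cr w\end{pmatrix}^T$ with $w\in\{0,1\}^{kn}$, which is an element of the right-hand set.

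There is essentially no hard step; the only point that needs a word of care is the sign ambiguity $z_0=\pm1$ in the rank-one factorization, which is harmless since $zz^T=(-z)(-z)^T$. It is worth remarking that the argument also shows why the rank-one hypothesis is indispensable: without it, $\arrow(Y)=e_0$ only enforces $Y_{00}=1$ and $\diag(\bar S)=s$, a genuine convex relaxation of the right-hand (discrete) set.
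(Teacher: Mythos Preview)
Your proof is correct and follows essentially the same approach as the paper's: both argue the two inclusions directly, using the rank-one factorization $Y=zz^T$ for $(\subseteq)$ and checking the $\arrow$ condition componentwise. The only cosmetic difference is in handling the sign ambiguity $z_0=\pm 1$: you normalize via $z\mapsto -z$, whereas the paper treats the two cases $x_0=1$ and $x_0=-1$ separately and then observes that the resulting sets of outer products coincide.
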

\begin{proof}
$(\supseteq)$: This is clear from the definitions.\\
$(\subseteq)$: Since $Y$ is symmetric, positive semidefinite and has
rank $1$, there exist  $x_0\in \R$ and $x\in \R^{kn}$
such that $Y =
\begin{pmatrix} x_0\cr x\end{pmatrix}
\begin{pmatrix} x_0\cr x \end{pmatrix}^T$.
Since $\arrow(Y)=e_0$,
$x_0^2=1$ and $x\circ x = x_0x$. If $x_0=1, x\in\{0,1\}^{kn}$; otherwise
$x_0=-1$ and $x\in\{0,-1\}^n$ and it is easy to verify that 
\[\left\{
\begin{pmatrix} 1\cr x\end{pmatrix}
\begin{pmatrix} 1\cr x\end{pmatrix}^T
: x\in\{0,1\}^{kn}\right\} = \left\{
\begin{pmatrix} -1\cr x\end{pmatrix}
\begin{pmatrix} -1\cr x\end{pmatrix}^T
: x\in\{0,-1\}^n\right\}.\]
\end{proof}
For the ``only-one-element-from-each-set'' linear equality constraint 
$Ax=e$ (see \cref{eq:1each}), 
we use the following positive semidefinite matrix
\begin{equation}
\label{eq:matK}
K:=\matrixTwoOne{-e^T}{A^T}\matrixTwoOne{-e^T}{A^T}^T\in
\Sc^{kn+1}_+.
\end{equation}
We observe that
\begin{equation}
\label{eq:lineqnAK}
\begin{array}{rcl}
Ax=e &\iff & \begin{pmatrix} 1\cr x\end{pmatrix}^T
\matrixTwoOne{-e^T}{A^T}=0
\\&	\iff &   Y_xK =
\begin{pmatrix} 1\cr x\end{pmatrix}
\begin{pmatrix} 1\cr x\end{pmatrix}^T
\matrixTwoOne{-e^T}{A^T}\matrixTwoOne{-e^T}{A^T}^T=0
\\  & \iff &KY_x=0, 
\end{array}
\end{equation}
i.e.,
$\range(Y_x)\subseteq\nul(K)=\nul\left(\begin{bmatrix}-e&A\end{bmatrix}\right)$.
Moreover, this
emphasizes that strict feasibility fails for feasible $Y$ even if we
ignore the rank-1 constraint. If we choose $V$ full column rank so that $\range(V) = \nul(K)$, then we can
\emph{facially reduce} the problem using the substitution
\begin{equation}
\label{eq:facialvector}
Y \leftarrow VR  V^T  \in V\Sp^{nk+1-k}V^T \unlhd \Sc^{kn+1}_+,
\end{equation}
where $\unlhd$ denotes \emph{face of}.
This makes the constraint $KY=0$ redundant. More detailed discussion for constructing the facial vector $V$ is provided later in \Cref{sec:facialvector}.

Then the rank restricted \SDP reformulation of  \cref{eq:BCQP} becomes
\[(\SDP)\qquad \begin{array}{rcl}
\label{e:SDPr}
\textdef{$p^*$} = & \min & \iprod{\hat{D}}{Y}\\
  				&		&\arrow(Y)=e_0\\
				&		&\rank(Y) = 1\\
				&		&KY = 0 \\
				&		& Y \in \Sknop.
\end{array}
\]

\subsubsection{Relaxing the rank-$1$ constraint}
 Since the $\NP$-hardness of the \SDP formulation comes from the
rank-$1$ constraint, we now relax the problem by deleting this
constraint. The \SDP relaxation of the above model is
\begin{equation}
\label{eq:SDPrelaxone}
(\SDP \text{ relax})\qquad \begin{array}{rcl}
		  \textdef{$p^*$} = & \min_{Y\in\Skno} & \iprod{\hat{D}}{Y}\\
					&		      			  &\arrow(Y)=e_0\\
			&   			  &KY = 0 \\
			 &     & Y\succeq 0.
				\end{array}
\end{equation}
However, the improved processing efficiency of this convex
relaxation trades off with the accuracy of solving the original
NP-hard problem. The rank of an optimal $Y$ can now be greater
than one. The idea now is to impose a ``correct'' amount of
redundant constraints in the \SDP model that reduces the rank of an
optimal solution as much as possible, but does not hurt the processing
efficiency of the model too much. Note that if we have ignored the rank
one constraint and replaced the $KY=0$ constraint using facial reduction
and substituting $Y=VRV^T$, then strict feasibility holds, i.e.,~the
barycenter of the  
the lifted vertices of the feasible set
of~\cref{eq:BCQP} yields an $\hat R, Y = V\hat RV^T$, that satisfies
strict feasibility, e.g.,~\cite{DrusWolk:16}.
		 
\subsubsection{The gangster constraint}
 The \textdef{gangster constraint} is essentially a trivial projection
that fixes at $0$ (shoots holes at)
certain entries of the matrix. The entries are given in the
\textdef{gangster index, $\cJ$}. By abuse of notation, we allow one entry to be
fixed at $1$.
The gangster constraint in our case comes from the
linear constraint $Ax=e$ combined with the binary constraint on $x$.
We let $S \circ T$ denote the Hadamard (elementwise) product.
\index{$S \circ T$, Hadamard (elementwise) product}
\index{Hadamard (elementwise) product, $S \circ T$}
\index{gangster index, $\cJ$}
\begin{prop} 
\label{prop:gangstindices}
Let $x$ be feasible for \BCQPp. Then
		\[[A^TA-I]\circ xx^T = 0,\]
and $A^TA-I \geq 0, xx^T\geq 0$. 
\end{prop}
\begin{proof}
Recall that $x\in \R_+^{kn}$.
We now use basic properties of the Kronecker product,
e.g.,~\cite{schaecke:04}, and see that
\index{Kronecker product, $\otimes$}
\index{$\otimes$, Kronecker product}
\[
A =I_k \otimes e^T, \, 
A^T =I_k \otimes e, \, \quad
A^TA = I_k\otimes ee^T,
\]
i.e.,~$A^TA = \BlkDiag(ee^T,\ldots, ee^T)$, a block diagonal structure, and the columns of $A$ are unit vectors. 
Therefore $A^Te_k = e_{nk}$ and
$\Diag(\diag(A^TA)) = I_{kn}$. 
The nonnegativity results follow from the definition, as does $Y_{00}=1$. 

Then
\[
\begin{array}{rll}
		Ax=e\implies  & A^TAx = A^Te\\
			\implies  & A^TAx - Ix = A^Te - Ix\\
			 \implies & (A^TA - I)x = e_{nk}-x\\
			 \implies & (A^TA - I)xx^T = (e-x)x^T = ex^T-xx^T\\
			 \implies & \trace[(A^TA - I)xx^T] =
\trace[ex^T-xx^T] = \summ{i=1}{kn}{x_i-x_i^2}=0\\
			 \implies &(A^TA - I)\circ xx^T = 0.
	 	\end{array}\]
The final conclusion now follows from the nonnegativities in the
Hadamard product.
		\end{proof}
Define the gangster indices 
\[
\textdef{$\cJ$} :=
\left\{ (i,j) : \left( A^TA-I\right)_{ij} > 0
\right\}.
\]
The gangster constraint on $Y$ in \cref{eq:SDPrelaxone} is $Y_{00}=1$ and
\[
 \cJ(Y) = Y_\cJ = 0 \in \R^{|\cJ|}.
\]		
\index{$\cJ$, gangster indices}
From \Cref{prop:gangstindices}, we see that the 
\textdef{gangster indices, $\cJ$}, are
the nonzeros of the matrix $A^TA-I$,
i.e.,~the set of off-diagonal indices of the $n$-by-$n$ 
diagonal blocks of, all but the $0$-th row and column, of $Y_x$.
Our complete gangster index is \textdef{$\hat{\cJ}:=\{(0,0)\}\cup \cJ$}. 
We define the \textdef{gangster constraint mapping, $\cG_{\cJ}$}:
\index{$\cG_{\cJ}$, gangster constraint mapping}
\[
\cG_{\cJ}(Y) = Y(\cJ) \in \R^{|\cJ|}, 
\]
i.e.,~the elements of $Y$ indexed by the index set $\cJ$.
		
Now the \SDP relaxation model becomes
		\begin{equation}\label{eq:SDPrelax}
		\begin{array}{rcl}
		  \textdef{$p^*$} = & \min_{Y\in\Skno} & \iprod{\hat{D}}{Y}\\
	& &Y_{00} = 1 \\
	& &\arrowz(Y)=0\\
	&  &\GG_{\cJ}(Y)=0\\
				    	&
&KY = 0 \\
			 &     & Y\succeq 0.
		\end{array}
		\end{equation}
\begin{prop}
Consider the \SDP relaxation \cref{eq:SDPrelax} but without the
$\arrowz$ constraint.
Then every optimal solution satisfies the constraint
\[
\arrowz(Y) = 0,
\]
i.e.,~it was a redundant constraint.
\end{prop}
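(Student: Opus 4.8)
The plan is to prove the slightly stronger fact that \emph{every} rank-one feasible point of \cref{eq:SDPrelax} with the $\arrowz$ constraint deleted already satisfies $\arrowz(Y)=0$, so that optimality is never actually used. First I would invoke $Y\succeq0$ together with $\rank(Y)=1$ to write $Y=zz^T$ for some $z\in\R^{kn+1}$, and then use $Y_{00}=1$, which forces $z_0^2=1$; since $zz^T=(-z)(-z)^T$, we may assume $z_0=1$ and write $z=\begin{pmatrix}1\\ x\end{pmatrix}$, so $Y=\begin{pmatrix}1 & x^T\\ x & xx^T\end{pmatrix}$. In this representation $\arrowz(Y)=\begin{pmatrix}0\\ \diag(xx^T)-x\end{pmatrix}=\begin{pmatrix}0\\ x\circ x-x\end{pmatrix}$, so the claim reduces to showing that $x$ is a $0/1$ vector.

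Next I would extract the combinatorial content of the two surviving structural constraints. Because $Y=\begin{pmatrix}1\\ x\end{pmatrix}\begin{pmatrix}1\\ x\end{pmatrix}^T$, the chain of equivalences in \cref{eq:lineqnAK} shows that $KY=0$ is exactly $Ax=e$; with $A=\blkdiag(e_{n_1}^T,\dots,e_{n_k}^T)$ this says that the coordinates of $x$ in each of the $k$ blocks sum to $1$. The gangster constraint $\GG_\cJ(Y)=0$ gives $(xx^T)_{ij}=x_ix_j=0$ for every off-diagonal pair $i\neq j$ lying in a common block, since by \Cref{prop:gangstindices} the index set $\cJ$ is precisely the collection of such off-diagonal positions.

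Finally I would combine the two: within a fixed block the vanishing of all products $x_ix_j$ with $i\neq j$ forces at most one coordinate of $x$ in that block to be nonzero, and since these coordinates sum to $1$, exactly one of them is nonzero and equals $1$. Hence $x\in\{0,1\}^{kn}$ (one $1$ per block), so $x\circ x=x$ and $\arrowz(Y)=0$. I do not expect a genuine obstacle here—the proof is bookkeeping with the constraints $Y_{00}=1$, $KY=0$, $\GG_\cJ(Y)=0$; the only points requiring care are the sign ambiguity $z_0=\pm1$ (absorbed by $zz^T=(-z)(-z)^T$) and reading $\cJ$ correctly as the off-diagonal entries of the block-diagonal part, which is exactly what \Cref{prop:gangstindices} provides. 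The statement also follows from \cite[Thm.~2.1]{BurkImWolk:20}; the above is the short self-contained route I would take.
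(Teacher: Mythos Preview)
Your proposal is correct and in fact more complete than the paper's own argument. The paper's proof is a two-line sketch by contradiction: it asserts that if the $\arrowz$ constraint fails at some index (taken WLOG to be $i=1$), then the top-left $2\times 2$ principal minor of $Y$ would have rank $2$, contradicting $\rank(Y)=1$. As written, that step is opaque: for a rank-one $Y=zz^T$ with $z_0^2=1$ one always has $Y_{11}=Y_{01}^2$, so the $2\times 2$ minor is automatically singular regardless of whether $Y_{11}=Y_{01}$; the additional input needed to force $Y_{01}\in\{0,1\}$ is exactly the gangster and $KY=0$ information, which the paper does not make explicit (it defers to \cite[Thm.~2.1]{BurkImWolk:20}).

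Your route is genuinely different and cleaner: you write $Y=\begin{pmatrix}1\\ x\end{pmatrix}\begin{pmatrix}1\\ x\end{pmatrix}^T$, then use $KY=0\Leftrightarrow Ax=e$ from \cref{eq:lineqnAK} together with the gangster constraint (pairwise products vanish within each block) to conclude that each block of $x$ has a single entry equal to $1$, hence $x\circ x=x$. This is the self-contained argument the paper's citation is standing in for, and it makes transparent that optimality is irrelevant and that \emph{all} of $Y_{00}=1$, $KY=0$, and $\GG_\cJ(Y)=0$ are used. No gaps; your caveats about the sign of $z_0$ and the reading of $\cJ$ are exactly the right places to be careful.
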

\begin{proof}
It follows from \cite[Thm 2.1]{BurkImWolk:20}.
\end{proof}
Our empirical tests on random problems
without the $\arrowz$ constraint confirmed this result. However, the
extra redundant constraint is useful for the subproblems in the
splitting approach below.

%%%%%%%%%%%%%%%%%%%%%%%%%%%%%%%%%%%%%%%%%%%%%%%%%%%%%%%%
	\subsection{Doubly nonnegative (\DNNp) relaxation}
We now split the problem by using two variables 
$\{Y,R\}$ and apply a doubly nonnegative relaxation to
\cref{eq:SDPrelax}. This \emph{natural splitting} uses the facial
reduction obtained in~\cref{eq:facialvector} but with orthonormal
columns chosen for the facial vector $V$.
	
\subsubsection{Formulating facial vector $V$ for sparsity}
\label{sec:facialvector}
In this subsection, we suggest strategies for constructing 
sparse versions of the \textdef{facial vector, $V$}. Recall that the columns of 
the facial vector $V$ form an orthonormal basis for the nullspace of $K$.
\index{$V$, facial vector}

\index{$\unlhd$, face of}
\index{face of, $\unlhd$}

For a typical matrix $V$ see~\Cref{fig:Vmat} that is constructed using
\Cref{lem:findV}, below.
\begin{figure}[ht!]
\centering
\vspace{-2.2in}
\includegraphics[width=14cm]{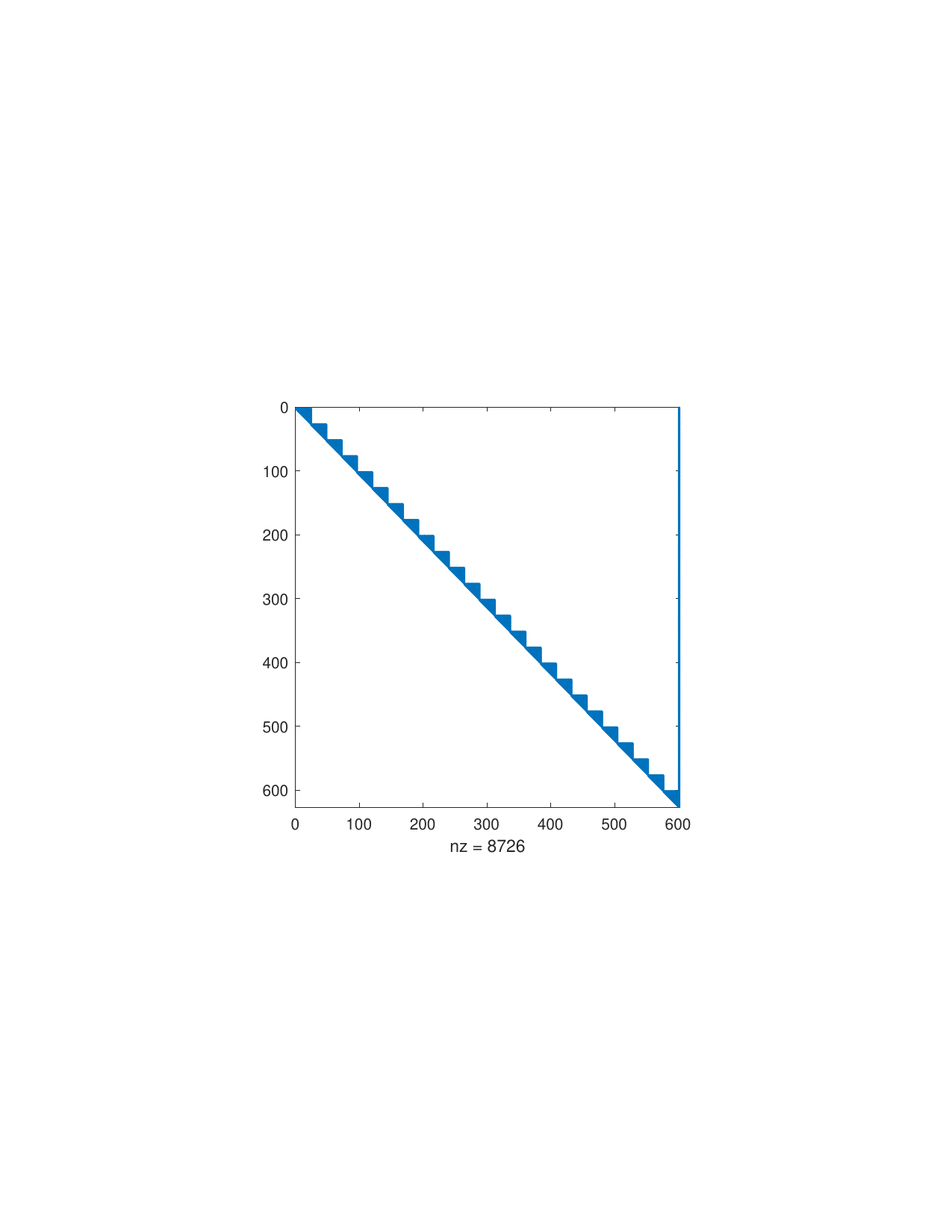}
\vspace{-2.0in}
\caption{V matrix for k=25, n=25 }
\label{fig:Vmat}
\end{figure}
Alternatively, we can use the \textsc{Matlab} 
QR algorithm (with $A$ specified as sparse) 
$[q,\sim]=qr(\begin{matrix} -e & A \end{matrix})$ 
and use the last part of $q$ for the nullspace.
This results in a relatively sparse orthonormal basis for the nullspace.

\begin{lemma}
	\label{lem:findV}
	Let $k,n$ be given positive integers and from above let
	\[
	A = \begin{bmatrix} I_k\otimes e_n^T \end{bmatrix}, \,
	B = \begin{bmatrix} -e_k & A \end{bmatrix}.
	\]
	Let $\cO \in \R^{n-1\times n-1}$ be the strictly upper triangular matrix 
	of ones of order $n-1$. Set 
	\[
	v=\begin{pmatrix} \frac 1{\sqrt{j+j^2}}\end{pmatrix}_j\in \R^{n-1}, \,
	\bar v=\begin{pmatrix} \frac j{\sqrt{j+j^2}}\end{pmatrix}_j\in \R^{n-1}, \,
	\beta = -1/\sqrt{n^2+nk},\, \text{and } \alpha =  n\beta.
	\]
	Let $\tilde \cO = -\cO \Diag(v) + \Diag(\bar v)$ and set
	\[
	\bar \cO = \begin{bmatrix}
		-v^T \\ \tilde \cO
	\end{bmatrix}
	=
	\begin{bmatrix}
		-v_1 & -v_2 & -v_3 & \cdots & -v_{n-1}\\
		\bar v_1 & -v_2 & -v_3 & \cdots & -v_{n-1}\\
		0 & \bar v_2 & -v_3  & \cdots & -v_{n-1}\\
		0 & 0 & \bar v_3  & \cdots & -v_{n-1}\\
		\vdots & \vdots & \vdots & \ddots & \vdots\\
		0 & 0 & 0 & \cdots & \bar v_{n-1}
	\end{bmatrix}.
	\]
	Then we have
	\[
	V=\begin{bmatrix}
		0                       & \alpha \cr
		I_k\otimes \bar \cO    & \beta e
	\end{bmatrix}\in \R^{nk+1\times (n-1)k+1}, \quad
	V^TV=I, \, BV = 0.
	\]
\end{lemma}
\begin{proof}
	Denote the $j$-th column of $V$ by $V_j$ and define $J_s:=\{j^s_1,j^s_2,\dots, j^s_{n-1}\}$, where $j^s_r=(n-1)(s-1)+r$. Notice that $J_s$ is the index set of columns of $V$ in $s$-th block. $j\in J_{k+1}$ means $V_j$ is the last column of $V$.

	We first prove that $V^TV=I$, i.e., column vectors of $V$ is orthonormal. Let $i,j\in\{1,\dots,(n-1)k+1\}$. We consider the following cases: \\
	If $j\le(n-1)k$, then
	\[
	V_j^TV_j = jv_j^2 + \bar v_j^2 = \frac{j}{j+j^2} + \frac{j^2}{j+j^2} = 1.
	\]
	If $j=(n-1)k+1$, then
	\[
	V_j^TV_j = \alpha^2 + nk\beta^2 = (n^2+nk)\beta^2 = 1.
	\]
	Now let $i<j$.
	If $i,j\in J_s$ for some $s\le k$. Then,
	\begin{align*}
		V_i^TV_j &= iv_iv_j - \bar v_iv_j\\
		&= i\cdot\frac{1}{\sqrt{i+i^2}}\frac{1}{\sqrt{j+j^2}} - \frac{i}{\sqrt{i+i^2}}\frac{1}{\sqrt{j+j^2}} = 0.
	\end{align*}
	If $j=(n-1)k+1$. Then,
	\begin{align*}
		V_i^TV_j = -iv_i\beta + \bar v_i\beta = (-iv_i+iv_i)\beta =0.
	\end{align*}
	If $i\in J_s$, $j\in J_t$ with $s<t\le k$. For each row, at least one of the vectors has $0$ entry, so trivially $V_i^TV_j=0$. This proves the orthonormality.

	Secondly, we observe $BV=0$, i.e., $\range{V}\subseteq\nul(B)$. To this end, we will see that $BV_j=0$ for each $j=1,\dots,(n-1)k+1$. Fix $s\in\{1,\dots,k\}$. If $j=(n-1)k+1$,
	\[
	\begin{pmatrix}
		BV_j
	\end{pmatrix}_s = -\alpha + n\beta = -n\beta + n\beta=0, 
	\]
	Now assume that $j\le (n-1)k$. If $j\in J_s$, then
	\[
	\begin{pmatrix}
		BV_j
	\end{pmatrix}_s = -jv_j+\bar v_j = -jv_j+jv_j=0, \text{ for each }i=1,\dots,k.
	\]
	Otherwise, trivially $\begin{pmatrix}
		BV_j
	\end{pmatrix}_s=0$. This justifies $BV=0$.
\end{proof}

We leave open the question on how to exploit the structure of $V$ to
obtain efficient matrix-matrix multiplications of the form $VRV^T$
needed in our algorithm.

\subsubsection{DNN reformulation via facial reduction}
Recall that the lifting for $Y_x$ has the form
\[
Y_x = \begin{pmatrix} 1\cr x\end{pmatrix}
\begin{pmatrix} 1\cr x\end{pmatrix}^T,\quad
 x\in\{0,1\}^{kn}. 
\]
Hence, we can impose the redundant elementwise \LP relaxation bound constraint 
$0\leq Y\leq 1$. 
Moreover, the constraint $KY = 0$ is equivalent to applying
\FRp, i.e.,~we get
\[
Y\succeq 0, KY=0\iff Y=VRV^T, R\in\Sp^{nk+1-k}.
\]
We now observe a useful redundant trace constraint on 
$Y$ and transform it onto $R$. 
\begin{lemma}
Let $Y\in\Sknplusone, Y=VRV^T, R\in\Sc^{nk+1-k}$. 
Then
\[
 KY=0, \arrow(Y)=e_0 \implies
              \trace(Y) =  \trace(R) = k+1.
\]
\end{lemma}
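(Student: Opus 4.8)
The plan is to unpack the hypothesis $\arrow(Y)=e_0$ into statements about the blocks of $Y$, extract from $KY=0$ the single linear identity that is needed, and then transfer the trace from $Y$ to $R$ using orthonormality of $V$.

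First I would write $Y$ in block form, $Y=\begin{pmatrix} y_0 & s^T\cr s & \bar Y\end{pmatrix}$ with $y_0\in\R$, $s\in\R^{kn}$, $\bar Y\in\Sc^{kn}$. By the definition of $\arrow$, the equation $\arrow(Y)=e_0$ is exactly $y_0=1$ together with $\diag(\bar Y)=s$. Consequently
\[
\trace(Y)=y_0+\trace(\bar Y)=1+e^T\diag(\bar Y)=1+e^Ts,
\]
so everything reduces to proving $e^Ts=k$.

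Next I would use $KY=0$. Since $Y=VRV^T$ with $\range(V)=\nul(K)=\nul\left(\begin{bmatrix}-e & A\end{bmatrix}\right)$ (recall $K=\begin{bmatrix}-e & A\end{bmatrix}^T\begin{bmatrix}-e & A\end{bmatrix}$ from \cref{eq:matK}, so the two null spaces coincide, as in \cref{eq:lineqnAK}), we get $\range(Y)\subseteq\nul\left(\begin{bmatrix}-e & A\end{bmatrix}\right)$. Applying this to the leading (zeroth) column of $Y$, which is $\begin{pmatrix}1\cr s\end{pmatrix}$, gives $\begin{bmatrix}-e & A\end{bmatrix}\begin{pmatrix}1\cr s\end{pmatrix}=0$, i.e. $As=e$. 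Since $A=I_k\otimes e_n^T$ is block diagonal, $As=e$ says precisely that each of the $k$ length-$n$ blocks of $s$ sums to $1$; summing over the $k$ blocks yields $e^Ts=k$. Combined with the previous display, $\trace(Y)=1+k=k+1$. Finally, since $V^TV=I$ by \Cref{lem:findVLeo}, cyclic invariance of the trace gives $\trace(R)=\trace(RV^TV)=\trace(VRV^T)=\trace(Y)=k+1$.

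The argument is entirely routine; there is no real obstacle. The only point deserving care is the step $KY=0\Rightarrow As=e$: I would justify it through the factorization $Y=VRV^T$ (equivalently, via $Y\succeq0$ as in \cref{eq:lineqnAK}), since for a general symmetric $Y$ the identity $KY=0$ need not force $\range(Y)\subseteq\nul(K)$. The other small observation is that the Kronecker/block structure $A=I_k\otimes e_n^T$ converts $As=e$ into the statement that every block of $s$ sums to $1$, which is what makes $e^Ts=k$ immediate.
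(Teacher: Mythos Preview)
Your proof is correct and follows essentially the same route as the paper's: both arguments use $\arrow(Y)=e_0$ to identify $\trace(Y)$ with $1+e^Ts$ (where $s$ is the zeroth column below the $(0,0)$ entry), apply $KY=0$ to that zeroth column to obtain $As=e$, sum the $k$ block equations to get $e^Ts=k$, and finish with $V^TV=I$ to transfer the trace to $R$. Your presentation is a bit more careful in flagging that the implication $KY=0\Rightarrow\range(Y)\subseteq\nul([-e\;A])$ relies on the factorization $Y=VRV^T$ (or equivalently $Y\succeq0$), which is a nice touch.
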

\begin{proof}
	Recall that $K:=\matrixTwoOne{-e^T}{A^T}\matrixTwoOne{-e^T}{A^T}^T$. Since $\nul(K)=\nul\Bigg(\matrixTwoOne{-e^T}{A^T}^T\Bigg)$, 
	we have 
	\[0=KY \iff 0 =  \matrixThreeFour{-1}{e^T}{...}{0^T}{...}{...}{...}{...}{-1}{0^T}{...}{e^T}
					\matrixC{Y_{0,0}}{...}{Y_{0,nk}}{...}{...}{...}{Y_{nk,0}}{...}{Y_{nk,nk}}.\]
					
	By expanding the first column of the product, we get $\summ{i=1}{n}{Y_{jn+i,0}}=1, \forall j\in\{0,...,k-1\}$. Since $\arrow(Y)=e_0$, this implies that $\trace(Y)=Y_{0,0}+\summ{j=0}{k-1}{\summ{i=1}{n}{Y_{jn+i,0}}} = 1 + k$. 
Since we choose the facial vector $V$ to have orthonormal columns,
the facial constraint yields
\[
1+k=\trace(Y)=\trace(VRV^T)=\trace(RV^TV)=\trace(R),
\] 
\end{proof}
	
Next, we incorporate all these constraints into the \SDP
relaxation model to form the \DNN relaxation model. Define the two set
constraints
\begin{equation}
\label{eq:setYR}
\textdef{$\cY$} :=\{Y\in\Sc^{nk+1}: Y_{00}=1, \GG_{\cJ}(Y)=0,
\arrowz(Y)=0, 0\leq Y\leq 1\}, \,\,\textdef{$\fancy{R}$}:=\{R\in\Sp^{nk+1-k}: \trace(R)=k+1\}.
\end{equation}
Our \textdef{\DNN relaxation} model is:
	\begin{equation}
	\label{eq:DNNrelax}
	\text{(\DNN)}\qquad
	\begin{array}{cl}
		p^*_{\DNN}:=\min_{R,Y} &  \iprod{\hat{D}}{Y} \\
     		   \text{s.t.} &  Y=VRV^T  \\
				 &	Y\in \fancy{Y} \\
				 &	R\in \fancy{R}.
	\end{array}
	\end{equation}
Observe that every feasible $Y$ is both element-wise nonnegative and 
$\PSD$, i.e.,~this is a \DNN relaxation.
Moreover, by construction, \DNN is trivially a feasible problem and the
constraint $Y-VRV^T=0$ is trivially surjective. Therefore, we have a
regularized problem.

The splitting allows for the two cones to be handled separately.
Combining them into one and applying e.g.,~an interior point approach is
known to be
very costly. 
And, one cannot get high dual feasibility accuracy  
and therefore the approximate
optimal value we get is not a provable lower bound for~\Cref{prob:mainWB}.
In summary, the expense does not scale well with $N$,
and we cannot apply weak duality and use the dual solution as both
primal and dual feasibility are not highly accurate. We overcome this
problem for the splitting method in~\Cref{Lagdual} below.
%%%%%%%%%%%%%%%%%%%%%%%%%%%%%%%%%%%%%%%%%%%%%%%%%%%%%%%%
\subsubsection{Characterization of optimality for \DNN relaxation}
\label{sec:optcond}
The generalized \KKT optimality conditions hold for~\cref{eq:DNNrelax}
with the normal cone $\NN_{\cY \times \cR}(Y,R)$. In addition,
the interior of the closed convex feasible set 
$\Int (\cY \times \cR) \neq \emptyset$ implies that
\[ \NN_{\cY \times \cR}(Y,R) = \NN_\cY(Y) \times \NN_\cR(R).
\]
We can now use the corresponding Lagrangian with dual variable $Z$:
\index{$\iota_S(\cdot)$, indicator function}
\index{indicator function, $\iota_S(\cdot)$}
\[
\cL(Y,R,Z) = \langle \hat D,Y\rangle + \langle Z,Y-VRV^T\rangle
	+\iota_{\YY}{(Y)}+\iota_{\RR}{(R)},
\]
where $\iota_S(\cdot)$ is the \emph{indicator function} for the set $S$.
Therefore the first-order optimality conditions to the 
problem in \cref{eq:DNNrelax} are:
a  primal-dual pair $(Y, R, Z)$ is optimal if, and only if,
		\begin{subequations}\label{eq:optcond}
			\begin{alignat}{5}
			Y & =   VRV^T,
             		 \quad R \in \RR, \,  Y \in \YY
            		& \quad \text{(primal feasibility)} \label{eq:DNNa}\\
			0&\in  -V^TZV+\NN_{\RR}(R)
            		&\quad \text{(dual $R$ feasibility)} \label{eq:DNNb} \\
			0&\in  \hat{D}+Z+\NN_{\YY}(Y)
            		&\quad \text{(dual $Y$ feasibility)} \label{eq:DNNc}
			\end{alignat}
		\end{subequations} 

By the definition of the normal cone, we can easily obtain the following 
		\Cref{thm:charactoptMAIN}.
		\begin{prop}[characterization of optimality for \DNN
in \cref{eq:DNNrelax}]
		\label{thm:charactoptMAIN}
		The primal-dual pair $(R,Y,Z)$ is optimal for \cref{eq:DNNrelax} if, and only if,
		\cref{eq:optcond} holds if, and only if,
		\begin{subequations}\label{eq:optcondnew}
			\begin{alignat}{4}
			R &=  \mathcal{P}_\RR (R+V^TZV) \label{eq:optcondnew_inR}  	\\
			Y& = \mathcal{P}_\YY (Y- \hat{D} - Z) \label{eq:optcondnew_inY}\\
			Y & =   VRV^T. \label{eq:optcondnew_pR}
			\end{alignat} 
		\end{subequations} 
		\end{prop}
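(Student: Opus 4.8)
The plan is to establish the two biconditionals in the statement one after the other. The first equivalence — optimality of $(Y,R,Z)$ for \cref{eq:DNNrelax} being the same as the system \cref{eq:optcond} — is essentially already assembled in the paragraph preceding the statement: since the linear map $\cM(Y,R)=Y-VRV^T$ is surjective and $\Int(\cY\times\cR)\neq\emptyset$, the standard convex first-order theory (Fenchel--Rockafellar, or subdifferential calculus applied to the Lagrangian $\cL$) gives that $(Y,R,Z)$ is a primal-dual optimal triple if and only if primal feasibility holds together with $0\in\partial_{(Y,R)}\cL(Y,R,Z)$, and that the normal cone of the product set splits, $N_{\cY\times\cR}(Y,R)=N_\cY(Y)\times N_\cR(R)$. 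Because the objective is linear and $\cY,\cR$ are convex, these conditions are necessary and sufficient; writing $0\in\partial_{(Y,R)}\cL$ out in the two blocks of variables produces exactly \cref{eq:DNNb}--\cref{eq:DNNc}, and primal feasibility is \cref{eq:DNNa}. So the substantive work is the second equivalence, \cref{eq:optcond} $\Leftrightarrow$ \cref{eq:optcondnew}.

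For that I would invoke the elementary projection characterization of the normal cone of a nonempty closed convex set $C$: for any point $w$ and any $x$, one has $x=\mathcal{P}_C(w)$ if and only if $x\in C$ and $w-x\in N_C(x)$. Apply this with $C=\RR$ and $w=R+V^TZV$: the inclusion \cref{eq:DNNb}, which says $V^TZV\in N_\RR(R)$, together with $R\in\RR$ (part of \cref{eq:DNNa}) becomes precisely $R=\mathcal{P}_\RR(R+V^TZV)$, i.e.\ \cref{eq:optcondnew_inR}; conversely that projection identity returns both $R\in\RR$ and \cref{eq:DNNb}. Apply it again with $C=\YY$ and $w=Y-\hat D-Z$: the inclusion \cref{eq:DNNc}, which says $-\hat D-Z\in N_\YY(Y)$, together with $Y\in\YY$ becomes $Y=\mathcal{P}_\YY(Y-\hat D-Z)$, i.e.\ \cref{eq:optcondnew_inY}, and conversely. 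What remains of \cref{eq:optcond} after these two translations is just $Y=VRV^T$, which is \cref{eq:optcondnew_pR}. Running the implications in both directions closes the loop and gives the claimed equivalence.

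The only things needing a line of care are: (i) that $\RR$ and $\YY$ are nonempty, closed, and convex, so that $\mathcal{P}_\RR$ and $\mathcal{P}_\YY$ are well-defined single-valued maps — $\RR$ is a trace-fixed slice of the PSD cone and $\YY$ is an affine subspace intersected with a box, so this is immediate; and (ii) that the membership requirements ``$R\in\RR$'' and ``$Y\in\YY$'' appearing in \cref{eq:DNNa} are not lost in passing to \cref{eq:optcondnew}, which is also immediate since the range of any projection lies in its target set. I do not anticipate a genuine obstacle: the proposition is really a restatement of the \KKT system in a fixed-point form tailored to the splitting iteration, and the single ``main step'' is the normal-cone/projection identity combined with the already-granted product rule for $N_{\cY\times\cR}$.
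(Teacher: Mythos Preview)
Your proposal is correct and matches the paper's approach: the paper does not give a detailed proof at all, but simply states that \Cref{thm:charactoptMAIN} follows ``by the definition of the normal cone,'' which is precisely the projection/normal-cone identity you spell out. Your write-up fills in exactly the details the paper omits.
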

	%======================================================================
\section{\sADMM algorithm; bounding; empirics}
\label{sect:ADMMalg}
The augmented Lagrangian corresponding to the \DNN relaxation
\cref{eq:DNNrelax}  with parameter $\beta>0$ is
\begin{equation}
	\label{eq:augmentLagrbeta}
	\LL_\beta(Y,R,Z):=\iprod{\hat{D}}{Y}+\iprod{Z}{Y-VRV^T}+\frac{\beta}{2}\normF{Y-VRV^T}^2+\iota_{\YY}{(Y)}+\iota_{\RR}{(R)}.
\end{equation}
 To solve our \DNN relaxation in  \cref{eq:DNNrelax},
we use the symmetric alternating directions method
 of multipliers $\sADMM$ that has intermediate updates of dual
multipliers $Z_t$: one dual update after the
$R$-update and then another after the $Y$-update. 
This approach has been used successfully in
\cite{BurkImWolk:20,LiPongWolk:19}. 
 Hence, both the $R$-update and the $Y$-update take into
account newly updated dual variable information.
(We include the details here for completeness.)

		 Let $Y_0\in \Sc^{nk+1}, Z_0\in \Sc^{nk+1}$.
The updates for all nonnegative integers $k\in \Zp$ are:
\index{nonnegative integers, $\Zp$}
\index{$\Zp$, nonnegative integers}
\begin{equation}
\label{e:admmup4}
\begin{array}{rcl}
R_{k+1}&=&\argmin_{R\in\Sc^{nk+1-k}}\LL_\beta(Y_k, R, Z_k)
\\
Z_{k+\frac{1}{2}}&=&Z_k+\beta(Y_k-VR_{k+1}V^T)
\\
Y_{k+1}&=&\argmin_{Y\in\Sc^{nk+1}}\LL_\beta( Y, R_{k+1},Z_{k+\frac{1}{2}})
\\
Z_{k+1}&=&Z_{k+\frac{1}{2}}+\beta(Y_{k+1}-VR_{k+1}V^T).
\end{array}
\end{equation}
	
	In our \DNN model \cref{eq:DNNrelax}, the objective function
is continuous and the feasible set is compact. By the Weierstrass 
theorem, an optimal primal pair $(Y^*, R^*)$ always exists. As seen
above, the
constraint is linear and surjective and strong duality holds for the
generalized \KKT conditions. (See the
optimality conditions in~\Cref{sec:optcond}). In fact, in our application
we modify the dual multiplier update using a projection, see
\Cref{lem:ZA} and \Cref{alg:sADMM}.
	
\subsubsection*{Explicit Primal updates for $R,Y$}
The success of our splitting method is dependent on efficiently solving
the subproblems.
 We start with using a spectral decomposition, implicitly
defined below, to get the:
\[\begin{array}{rcll}
R-\text{update} 
&=& \argmin_{R\in\Sc^{nk+1-k}}\LL_\beta(R, Y_k, Z_k) \\
&=& \argmin_{R\in\RR}\normF{Y_k-VRV^T+\frac{1}{\beta}Z_k}^2,  	&\text{by completing the square}\\
&=& \argmin_{R\in\RR}\normF{V^TY_kV-R+\frac{1}{\beta}V^TZ_kV}^2, &\text{since }V^TV=I\\
&=& \argmin_{R\in\RR}\normF{R - V^T(Y_k+\frac{1}{\beta}Z_k)V}^2	\\
&=& \PP_\RR[V^T(Y_k+\frac{1}{\beta}Z_k)V]					
\\ &=& U\Diag[\PP_{\simplex{k+1}}(\lambda)]U^T, & \text{spectral decomposition}		
\end{array}
\] 
where the $U\Diag(\lambda)U^T$ provides the spectral decomposition of $V^T(Y_k+\frac{1}{\beta}Z_k)V$ and then $\PP_{\simplex{k+1}}$ denotes the projection onto the
\textdef{simplex} $\simplex{k+1}:=\{x\in\Rnp:\iprod{e}{x}=1+k\}$, see
e.g.,~\cite{chen2011projection}.
	
		Next for the 
		\[\begin{array}{rcll}
Y-\text{update}
		&=&\argmin_{Y\in\Sc^{nk+1}}\LL_\beta(R_{k+1}, Y, Z_{k+\frac{1}{2}})
		\\
			&=& \argmin_{Y\in\YY}\normF{Y-[VR_{k+1}V^T - \frac{1}{\beta}(\hat{D}+Z_{k+\frac{1}{2}})]}^2   	&\text{by completing the square}\\
			&=& \PP_\YY\left(
VR_{k+1}V^T - \frac{1}{\beta}(\hat{D}+Z_{k+\frac{1}{2}})\right)\\
			&=&
\PP_{\rm arrowbox}\left(\PP_{\nul\left(\GG_{\hat{\cJ}}\right)}[VR_{k+1}V^T -
\frac{1}{\beta}(\hat{D}+Z_{k+\frac{1}{2}})]\right),
\end{array}
\] 
where $\GG_{\hat{\cJ}}$ is the gangster constraint linear transformation and
\textdef{$\PP_{\rm arrowbox}$} projects onto the polyhedral set 
$\{Y\in\Sc^{nk+1}: Y_{ij}\in [0,1], \arrow(Y) = e_0\}$.

		\subsubsection*{Dual updates}
		 The correct choice of the Lagrange dual
multiplier $Z$ is important in the progress of the algorithm 
and in obtaining strong lower bounds. In
addition, if the set of dual multipliers for all iterations is compact,
then it indicates the stability of the primal problem. Lagrange
multipliers are used to replace constraints by adding the appropriate
expression into the Lagrangian function. If an optimal
$Z^*$ for \cref{eq:DNNrelax} is known in advance, then it makes sense
that we do not need the corresponding
primal feasibility constraint $Y=VRV^T$. Hence, following the idea of
exploiting redundant constraints, we now aim to identify certain properties of an optimal dual multiplier and impose that property at each iteration of our algorithm. 

		\begin{lem}
\label{lem:ZA}
Let
\[
\cZ_A:=\left\{Z\in\Sknplusone: (Z+\hat{D})_{i,i}=0,
(Z+\hat{D})_{0,i}=0, (Z+\hat{D})_{i,0}=0, i=1,...,nk\right\}.
\]
Let $(Y^*, R^*, Z^*)$ be an optimal primal-dual pair 
for the \DNN in \cref{eq:DNNrelax}. Then, $Z^*\in\cZ_A$.
\end{lem}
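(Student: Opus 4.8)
The plan is to work directly from the first-order optimality (KKT) system recorded in \cref{eq:optcond} and its projection form \cref{eq:optcondnew}, and to extract the stated vanishing of $(\hat{D}+Z^*)$ on the ``arrow'' positions $(i,i),(0,i),(i,0)$, $i=1,\dots,nk$. The first observation is that $\hat{D}$ already vanishes on all of these positions: its $0$-th row and column are zero by construction, and its diagonal block $D$ is an \EDM and hence hollow. Consequently the three defining equations of $\cZ_A$ reduce to $Z^*_{i,i}=0$ and $Z^*_{0,i}=Z^*_{i,0}=0$, so it suffices to show that \emph{any} optimal dual multiplier has vanishing diagonal (below the $(0,0)$ entry) and vanishing $0$-th border.

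First I would use dual-$Y$ feasibility in the projection form $Y^*=\PP_{\YY}(Y^*-\hat{D}-Z^*)$ from \cref{eq:optcondnew_inY}. Since $\YY$ is a box-type polyhedron whose only coupling among the border/diagonal entries is the arrow equality $Y_{i,i}=Y_{0,i}$ (the gangster constraints and the $Y_{00}=1$ constraint touch disjoint positions), the Euclidean projection $\PP_{\YY}$ separates into independent scalar problems, one per arrow block $\{Y_{i,i},Y_{0,i}\}$. Solving that two-variable projection, with Frobenius weight $2$ on the off-diagonal coordinate and weight $1$ on the diagonal coordinate, and substituting $\hat{D}_{i,i}=\hat{D}_{0,i}=0$, yields the stationarity identity $(\hat{D}+Z^*)_{i,i}+2(\hat{D}+Z^*)_{0,i}=0$ whenever the common value $Y^*_{i,i}=Y^*_{0,i}$ lies in $(0,1)$, together with the corresponding one-sided inequalities at the box endpoints. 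Equivalently, $-(\hat{D}+Z^*)$ restricted to the block $\{(i,i),(0,i),(i,0)\}$ lies along the single arrow direction $E_{i,i}-\tfrac12(E_{0,i}+E_{i,0})$, i.e.\ it equals $\arrow^*(w)$ for a multiplier vector $w$ (modulo active box terms).

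The crux, and the step I expect to be the main obstacle, is to upgrade this \emph{combined} identity to the \emph{separate} vanishing $(\hat{D}+Z^*)_{i,i}=(\hat{D}+Z^*)_{0,i}=0$, i.e.\ to force the arrow multiplier $w_i$ (and any active box multiplier at these positions) to be zero for \emph{every} optimal dual. This cannot come from $\YY$ alone, since the arrow equality is a genuine facet of the polyhedron $\YY$ and its multiplier is a priori free. The plan is therefore to couple the two blocks of the KKT system: I would feed in dual-$R$ feasibility \cref{eq:DNNb}, which by $\RR=\{R\succeq0:\iprod{I}{R}=k+1\}$ gives $V^TZ^*V=\tau I+S$ with $S\preceq0$ and $(V^TZ^*V-\tau I)R^*=0$, together with primal feasibility $Y^*=VR^*V^T$. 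Using the explicit facial vector $V$ of \Cref{lem:findVLeo} --- crucially that its $0$-th row is the sparse vector $\alpha\,e_{\mathrm{last}}^T$ --- I would compute $V^TE_{i,i}V$ and $V^T(E_{0,i}+E_{i,0})V$ and show that the move along the combined-identity-preserving direction $2E_{i,i}-(E_{0,i}+E_{i,0})$ changes $V^TZ^*V$ in a way incompatible with the semidefinite bound $V^TZ^*V\preceq\tau I$ and the complementarity $(V^TZ^*V-\tau I)R^*=0$ unless $w_i=0$.

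I expect this last step to be delicate: the compressed map $Z\mapsto V^TZV$ has a large kernel, so the border and diagonal entries of $Z^*$ are pinned down only through their images $V^T(\cdot)V$ and the eigenstructure of $R^*$. Making the argument uniform over \emph{all} optimal pairs (rather than merely exhibiting one optimal $Z^*\in\cZ_A$) is exactly where the facial-reduction sparsity of $V$ and the semidefinite complementarity of the $R$-block must be used together; I would therefore carry out the block computation of $V^TE_{i,i}V$ and $V^TE_{0,i}V$ explicitly, locate the relevant entries of the top eigenspace of $V^TZ^*V$, and translate $w_i=0$ into a relation on $R^*$ that is in turn forced by $Y^*_{i,i}=Y^*_{0,i}$ and the hollowness of $\hat{D}$.
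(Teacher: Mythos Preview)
Your setup is correct: $\hat D$ vanishes on the arrow positions, so the claim reduces to $Z^*_{i,i}=Z^*_{0,i}=0$; and from $-(\hat D+Z^*)\in N_{\YY}(Y^*)$ alone you only get control of the combination $(\hat D+Z^*)_{i,i}+2(\hat D+Z^*)_{0,i}$, since the equality $Y_{i,i}=Y_{0,i}$ contributes a free multiplier in the direction $E_{i,i}-\tfrac12(E_{0,i}+E_{i,0})$. Identifying that as the crux is exactly right.

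Where you diverge from the paper is in how you propose to kill that free multiplier. The paper does not argue through the $R$-block at all: it invokes a \emph{reformulation of the feasible set $\YY$} (deferring to \cite[Thm~2.14]{QAP} and \cite{BurkImWolk:20}). In that argument one rewrites $\YY$ so that the arrow equality is absorbed --- each triple $\{(i,i),(0,i),(i,0)\}$ is treated as a single box variable --- and shows that one may replace $Z^*$ by its projection onto $\cZ_A$ without destroying optimality. In other words, the cited result is really ``an optimal $Z^*$ can be taken in $\cZ_A$'', which is all that is needed to justify the projection step in~\Cref{alg:sADMM}. Read literally as a statement about \emph{every} optimal multiplier, the lemma would require exactly the extra rigidity you are trying to manufacture.

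Your proposed route --- feeding the arrow direction $2E_{i,i}-E_{0,i}-E_{i,0}$ through $Z\mapsto V^TZV$ and arguing via $V^TZ^*V\preceq\tau I$ and complementarity with $R^*$ --- does not obviously close. Moving $Z^*$ along that direction changes $V^TZ^*V$ by a rank-$\le 2$ symmetric perturbation that is neither semidefinite nor confined to $\ker R^*$ in general, so there is no clean contradiction with the $\RR$-normal-cone conditions; you would at best recover that \emph{some} choice of the arrow multiplier is admissible, which is again the existential statement. So the plan in your last two paragraphs is aiming at a stronger conclusion than the cited references actually deliver, and the mechanism you propose (semidefinite complementarity on the compressed variable) is not the one used there. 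The short route is the $\YY$-reformulation: show that zeroing the arrow entries of $\hat D+Z^*$ keeps $-(\hat D+Z^*)$ in $N_{\YY}(Y^*)$ (trivial, since $0$ satisfies whatever sign condition the box imposes on the combined coordinate) and then check separately that this modified $Z^*$ still satisfies the dual conditions for the full \DNN problem.
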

\begin{proof}
				The proof of this fact uses the dual $Y$ feasibility
		condition \cref{eq:DNNc} and a reformulation of the $Y$-feasible set.
		The details are in \cite[Thm 2.1]{QAP} and
\cite{BurkImWolk:20}. 
	\end{proof}	

In view of \Cref{lem:ZA} we propose the following modification
of the symmetric \ADMM algorithm, e.g.,~\cite{MR3231988}.
Our modification is in the way we update the multiplier.
		At every initial or intermediate update of the multiplier we project the dual variable onto $\cZ_A$, i.e:
\begin{itemize}
\item 
$Z_{j+\frac{1}{2}}:=Z_j+\gamma\beta \PP_{\cZ_A}(Y_j-VR_{j+1}V^T)$;
\item
$Z_{j+1}:=Z_{j+\frac{1}{2}}+\gamma\beta\PP_{\cZ_A}(Y_{j+1}-VR_{j+1}V^T)$.
\end{itemize}
Note that a convergence proof using the modified updates is given
in~\cite[Thm 3.2]{QAP}.
Therefore, in view of the \ADMM updates~\cref{e:admmup4}
we propose the following~\Cref{alg:sADMM} with modified $Z$ updates.
The $\gamma \in (0,1)$ is the dual steplength.
	\begin{algorithm}
		\caption{\sADMM, modified symmetric \ADMMp}
\label{alg:sADMM}
		\begin{algorithmic}
			\STATE Initialization: $j=0,Y_j=0\in S^{nk+1},
Z_j=\PP_{\cZ_A}(0), \beta=\max(\floor{\frac{nk+1}{k}}, 1), \gamma=0.9$
			\WHILE{termination criteria are not met}
				\STATE $R_{j+1} =
U\Diag[\PP_{\Delta_{j+1}}(d)]U^T$ where $U\Diag(d)U^T=\eig(V^T(Y_j+\frac{1}{\beta}Z_j)V)$
				\STATE $Z_{j+\frac{1}{2}} = Z_j + \gamma\beta \PP_{\cZ_A}(Y_j - VR_{j+1}V^T)$
				\STATE $Y_{j+1} = \PP_{\rm arrowbox}[\PP_{\nul(\cG_{\hat{J}})}(VR_{j+1}V^T - \frac{1}{\beta}(\hat{D}+Z_{j+\frac{1}{2}}))]$
				\STATE $Z_{j+1} = Z_{j+\frac{1}{2}} + \gamma\beta \PP_{\cZ_A}(Y_{j+1} - VR_{j+1}V^T)$
\STATE  $j = j+1$
			\ENDWHILE
		\end{algorithmic}
	\end{algorithm}
%%%%%%%%%%%%%%%%%%%%%%%%%%%%%%%%%%%%%%%%%%%%%%%%%%%%%%%
%%%%%%%%%%%%%%%%%%%%%%%%%%%%%%%%%%%%%%%%%%%%%%%%%%%%%%%%
\begin{remark}
In passing, we point out that we could choose any $\gamma \in (0,1)$
and $\beta>0$. Theoretically this is all what we need. 
In our numerical experiments for~\Cref{alg:sADMM} we
used an adaptive $\beta$ based on the discussion in~\Cref{sect:adaptstep}.
\end{remark}

\subsection{Bounding and duality gaps}
Strong upper and lower bounds allow for early stopping conditions as
well as proving optimality. We now provide provable upper and lower
bounds to machine precision.

\subsubsection{Provable lower bound to NP-hard problem}
\label{Lagdual}
 The Lagrangian dual function $g:\Sc^{nk+1}\rightarrow\R$
to the \DNN model that we use is 
	\[\begin{array}{rcl}
g(Z)  &=&\min_{R\in\RR,Y\in\YY}\iprod{\hat{D}}{Y}+\iprod{Z}{Y-VRV^T}\\
			       			&=&		 \min_{Y\in \YY, R\in\RR}\iprod{\hat{D}+Z}{Y} - \iprod{Z}{VRV^T}\\
			       			&=&	        \min_{Y\in \YY} \iprod{\hat{D}+Z}{Y} + \min_{R\in\RR} (-\iprod{V^TZV}{R})\\
			      			&=&		\min_{Y\in \YY} \iprod{\hat{D}+Z}{Y} - \max_{R\in\RR} \iprod{V^TZV}{R}\\
			       			&=&		\min_{Y\in \YY} \iprod{\hat{D}+Z}{Y} - \max_{\|v\|^2=(k+1)} v^TV^TZVv\\
			       			&=&		\min_{Y\in \YY}\iprod{\hat{D}+Z}{Y} - (k+1)\lambda_{\max}(V^TZV).
	\end{array}\]
Hence, at iteration $j$, and applying weak duality,
a lower bound to the optimal value of the \DNN model~\cref{eq:DNNrelax} is 
\begin{equation}
\label{eq:weakdualityDNN}
\begin{array}{rcl}
p^*_{\DNN} 
& \geq & 
\max_Z g(Z) 
\\&\geq &  
\min_{Y\in \fancy{Y}}\iprod{\hat{D}+Z_j}{Y} 
         - (k+1)\lambda_{\max}(V^TZ_jV).
\end{array}
\end{equation}
Note that from the definition of $\cY$ in~\cref{eq:setYR}, this bound is
found from solving: an \LP with a 
simplex type feasible set; and an eigenvalue
problem.  Thus both values can be found accurately and efficiently.
Moreover, since \DNN is a relaxation, weak duality implies that
this lower bound is a \emph{provable lower bound} for the original
NP-hard~\Cref{prob:mainWB}.

\subsubsection{Upper bounds}
As for the upper bound, we consider two strategies for finding feasible solutions to the \BCQP in \cref{eq:BCQP}. 
The $0$-column approach is to take all but the first element of this $0$-th column $Y(1\!:\!\text{end},0)$ and compute its nearest feasible solution to \BCQPp.
It is equivalent to the greedy approach of using only the maximum weight index for each consecutive block of length $n$, see~\cite[Section 3.2.2]{BurkImWolk:20}.
	
Alternatively, we use the eigenvector of $Y$ corresponding to
the largest eigenvalue. The Perron-Frobenius Theorem implies this eigenvector is nonnegative, as $Y$ is nonnegative.
We then compute the nearest feasible solution to \BCQPp. 
It is again equivalent to the greedy approach but with using the eigenvector.
	
Then, we compare the objective values for both approaches and select the upper bound with smaller magnitude. 
The relative duality gap at the current iterate $j$ is defined to be $\frac{UB_j-LB_j}{|UB_j|+|LB_j|+1}$ where $UB_j$, $LB_j$ denote the current best upper, and lower bound, respectively.
%%%%%%%%%%%%%%%%%%%%%%%%%%%%%%%%%%%%%%%%%%%%%%%%%%%%%%%%
%%%%%%%%%%%%%%%%%%%%%%%%%%%%%%%%%%%%%%%%%%%%%%%%%%%%%%%
	\subsection{Stopping criterion}
 By \Cref{thm:charactoptMAIN}, we can define the primal and dual
 residuals of the $\sADMM$ algorithm at iterate $j$ as follows:
	\begin{itemize}
		\item Primal residual $r_j:=\|Y_j-VR_jV^T\|$;
		\item Dual-$R$ residual $s^R_j:=\|R_j -
\PP_\RR\left(R_j+V^TZ_jV\right)\|$;
		\item Dual-$Y$ residual $s^Y_j:=\left\|Y_j -
\PP_\YY\left(Y_j-\hat{D}-Z_{j+\frac{1}{2}}\right)\right\|$.
	\end{itemize}
	We terminate the algorithm once one of the following conditions is satisfied:
	\begin{itemize}
		\item The maximum number of iterations
$(\mathrm{maxiter}):=10^4+k(nk+1)$ is reached;
		\item The relative duality gap is less or equal to
$\epsilon$, a given tolerance;
\item $\mathrm{KKTres}:=\max\{r_j, s^R_j, s^Y_j\}<\eta$, a given tolerance.
		\item Both the least upper bound and the greatest lower
bound have not changed for $\mathrm{boundCounterMax}:=200$ times (stalling).
	\end{itemize}
%%%%%%%%%%%%%%%%%%%%%%%%%%%%%%%%%%%%%%%%%%%%%%%%%%%%%%%%
%%%%%%%%%%%%%%%%%%%%%%%%%%%%%%%%%%%%%%%%%%%%%%%%%%%%%%%
	\subsection{Heuristics for algorithm acceleration}
		\subsubsection{Adaptive step size}
\label{sect:adaptstep}
		 We apply the heuristic idea presented in 
\cite{ADMMBoyd}, namely we bound the gap between the primal and dual
residual norms within a factor of $\mu:=2$ as they converge to 0. This
guarantees that they converge to 0 at about the same rate and one
residual does not overshoot the other residual by too much. Since a large penalty $\beta$ prioritizes primal feasibility over dual feasibility and a small penalty $\beta$ prioritizes dual feasibility over primal feasibility, we scale $\beta$ by a factor of $\tau^{\mathrm{incr}}:=2$ if the primal residual overshoots the dual residual by a factor of $\mu$ and scale $\beta$ down by a factor of $\tau^{\mathrm{decr}}:=2$ if the dual residual overshoots the primal residual by a factor of $\mu$. Otherwise, we keep $\beta$ unchanged. Namely,
		\[\beta_{j+1}:=\begin{cases}
			\tau^{\mathrm{incr}}\beta_j, & \norm{r_j}_2 > \mu\norm{s_j}_2;\\
					\frac{\beta_j}{\tau^{\mathrm{decr}}},
					&\norm{s_j}_2 > \mu\norm{r_j}_2;\\
					\beta_j, & \text{otherwise}.
				  \end{cases}\]
%%%%%%%%%%%%%%%%%%%%%%%%%%%%%%%%%%%%%%%%%%%%%%%%%%%%%%%
		\subsubsection{Transformation and scaling}
In this subsection, we consider translating and scaling the objective
function, i.e.,~$\hat{D}$.
Define the orthogonal projection map $P_V := VV^T$. Then, 
		\index{$\hat{D}$ scaled}
		\begin{equation}
		\label{eq:shifteigLtrans}
		\begin{array}{rcl}
		\iprod{\hat{D}}{Y} &:=& \iprod{\hat{D}+\alpha I}{Y}  - (n+1)\alpha\\
					   &=& \iprod{\hat{D}+\alpha I}{P_V YP_V}  - (n+1) \alpha\\
					   &=& \iprod{(P_V\hat{D}P_V+\alpha I)}{Y}   - (n+1) \alpha.
		\end{array}
		\end{equation}
		Hence, when finding the optimal $Y$, 
		\[\begin{array}{ll}
		\iprod{\hat{D}}{Y} \text{ is minimized} &\iff \delta\iprod{\hat{D}}{Y}=\iprod{\delta(P_V\hat{D}P_V+\alpha I)}{Y}-(n+1)\delta\alpha \text{ is minimized}\\
							       		&\iff \iprod{\delta(P_V\hat{D}P_V+\alpha I)}{Y} \text{ is minimized}.
		\end{array}\]
		This lets us transform $\hat{D}$ into
$\delta(P_V\hat{D}P_V+\alpha I)$ without changing the optimal solutions.
		 Numerical experiments show that once we scale $\hat{D}$
by some $\delta>0$, the convergence becomes faster for the
aforementioned input data distributions, e.g.,~providing a normalization for the
objective function matrix.

%%%%%%%%%%%%%%%%%%%%%%%%%%%%%%%%%%%%%%%%%%%%%%%%%%%%%%%
		
%%%%%%%%%%%%%%%%%%%%%%%%%%%%%%%%%%%%%%%%%%%%%%%%%%%%%%%
%%%%%%%%%%%%%%%%%%%%%%%%%%%%%%%%%%%%%%%%%%%%%%%%%%%%%%%%

	\subsection{Numerical tests}
\label{sect:numerics}
We now illustrate the efficiency of our algorithm on medium and large
scale randomly generated problems. We observe that our \sADMM approach
finds the \emph{exact} solution with relative duality gap $<$ 1e-13,
i.e., in machine precision, in almost all of the instances we tried.
Sometimes, our algorithm gets stuck at the relative duality gap
$\approx$ 1e-13 and runs until it reaches the max iteration. These
indeterminate cases appear as ourliers in the numerical tests. Further discussion about the nontrivial duality gap follows in~\Cref{sect:multoptgaps}.

We use \textsc{Matlab} version 2025a on (fastlinux in caption): Dell PowerEdge, Two Intel Xeon Gold 6244 8-core 3.6 GHz (Cascade Lake), 192 GB for the tests in~\Cref{sec:hardness} and~\Cref{sec:hiddenedim}.
In~\Cref{sec:sADMMtests}, we use \textsc{Matlab} version 2022a on two linux servers:
(i) fastlinux: greyling22 Dell R840 $4$ Intel Xeon Gold $6254$, 
with $3.10$ GHz, $72$ core and $384$ GB for
\Cref{table:testingoutputsadmm}; and (ii) (biglinux in caption):
Dell PowerEdge R6625, two AMD EPYC $9754$ $128$-core $2.25$ GHz, $1.5$ TB for \Cref{table:largeprobs}.

\subsubsection{$\NP$-hardness of \WBPp}\label{sec:hardness}

To illustrate the difficulty in solving the original $\NP$-hard problem,
we solve $\BCQP$ in \cref{eq:BCQP} using the commercial \textsc{Gurobi}
solver, though total enumeration could be more competitive at times.
We set $n=(2\!:\!1\!:\!8)$, $k=(2\!:\!1\!:\!8)$ for the tests, i.e.,
each of the $k$ sets has the
same $n$ number of points. And we take the average of three tests for each
problem size. For each test, a random \EDM $D\in\Sc_+^{nk}$ is
generated with embedding dimension $d=2$.
\begin{figure}[ht!]
\centering
\includegraphics[width = 0.5\textwidth]{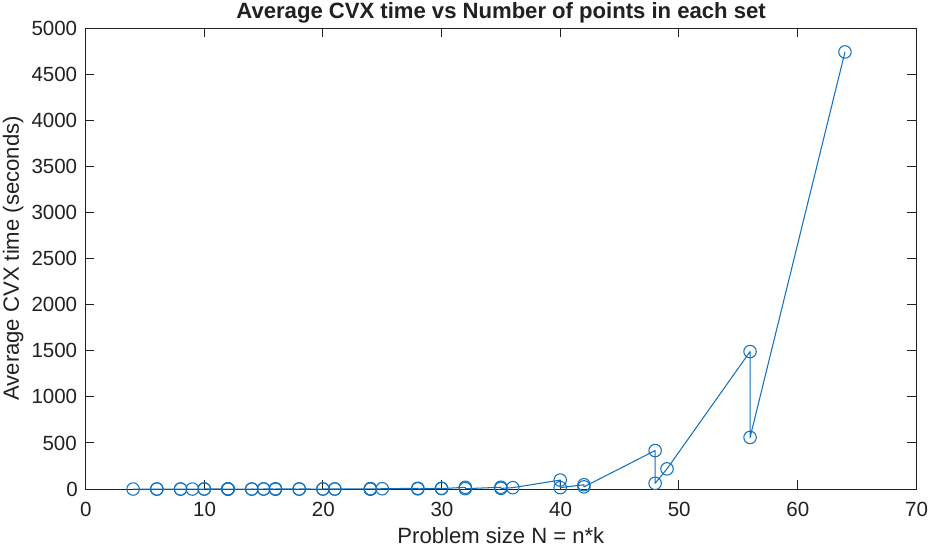}\hfill
\includegraphics[width = 0.5\textwidth]{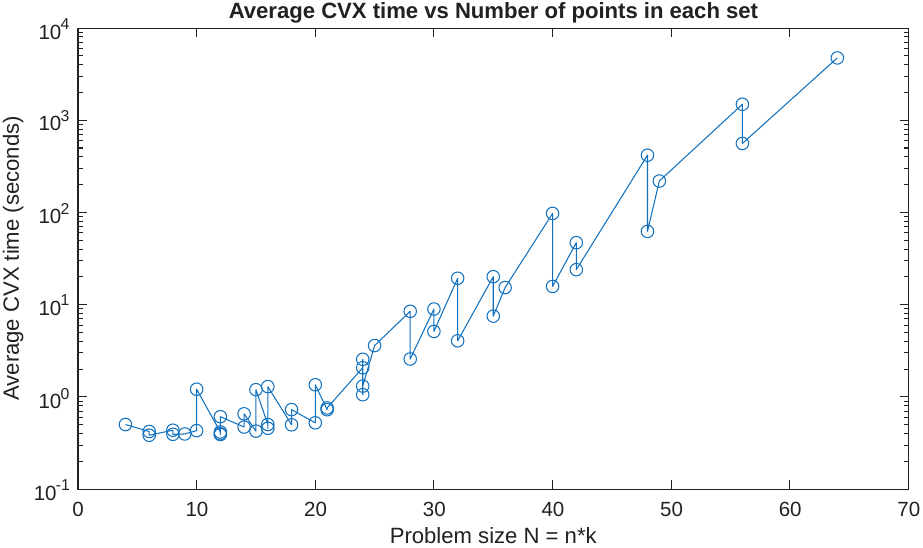}
\caption{ \textsc{Gurobi}: size $N=kn$ versus cpu time; illustrating exponential time}
\label{fig:BCQPGurobi}
\end{figure}
We can clearly see exponential growth in time
from~\Cref{fig:BCQPGurobi}. Note that the CVX time is related to the
number of feasible points, $n^k$. Hence, even if we have the same
problem size, e.g., $(n,k) = (7, 8), (8, 7)$, the number of vertices,
e.g., $n^k = 7^8, 8^7$, can be significantly different. Thus it appears
in the figures that we have two values for the same problem size.

In contrast, we see the slow (linear) growth for \sADMM, other than outliers,
for the computation time versus the size $N=\sum_in_i$, where $n_i$ are
the varying set sizes chosen randomly in
the interval of width $5$ about the given expected value $n$.
See~\Cref{fig:sizekntime}, page \pageref{fig:sizekntime}. The figure on
the right is log-log scale which reduces the effect of outliers, so we
can clearly see the linear relation. (Note that an outlier is 
generally a result of one of five instances having a positive duality gap.)
This was with $d = (3\!:\!3\!:\!6), k = (10\!:\!1\!:\!20), n =
(20\!:\!1\!:\!30)$, and each set has the same size. We run five problems for each data instance and take the average time.
\begin{figure}[ht!]
\centering
\includegraphics[width = 0.5\textwidth]{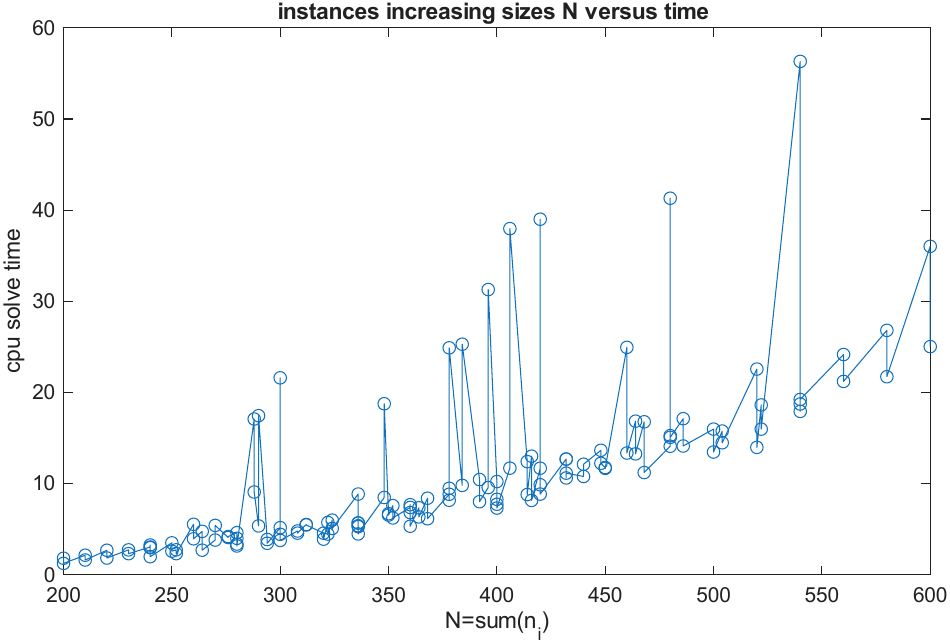}\hfill
\includegraphics[width = 0.5\textwidth]{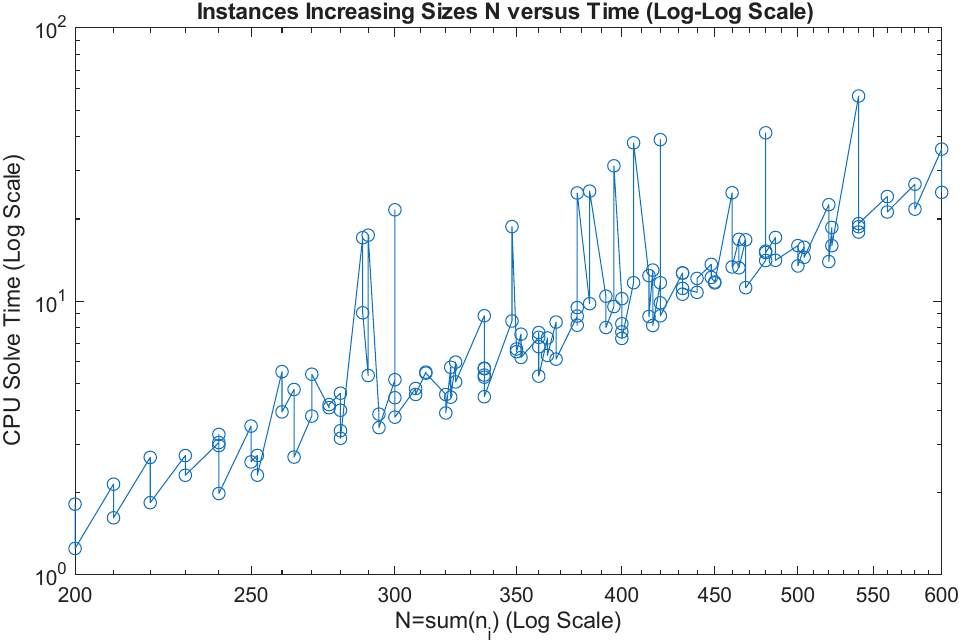}
\caption{\sADMM: size $N=kn$ versus cpu time; illustrating linear time}
\label{fig:sizekntime}
\end{figure}

\subsubsection{Success of \sADMM approach}\label{sec:sADMMtests}

The results below detail the efficiency and surprising success of our algorithm in finding the exact solution of the original NP-hard problem.

In \Cref{table:testingoutputsadmm}, page
\pageref{table:testingoutputsadmm}, we see a comparison between using
the \sADMM approach and CVX with the Sedumi solver. 
We can see the times for the CVXsolver increase dramatically.
The relative duality gap from CVX is \emph{not} a provable 
gap as the lower bound is
obtained using the dual optimal value minus the posted accuracy of the
solve from CVX;
we do not have accurate primal feasibility or dual feasibility from CVX.
Thus the relative gap is essentially the posted accuracy from CVX.
We do find a nearest feasible point to find the upper bound.
In summary, we see that the dramatic difference in time and the improved
accuracy with the guaranteed lower bound that verifies optimality.
\begin{table}[h]
\centering
\begin{tabular}{|ccc||cc|cc|} \hline
\multicolumn{3}{|c||}{dim/sets/size} & \multicolumn{2}{|c|}{Time (s)} & \multicolumn{2}{|c|}{rel. duality gap}\cr\hline  $d$&   $k$&   $N$&   ADMM & CVXsolver &    ADMM & CVXsolver  \cr\hline
    2 &     8 &    56 &   0.14 &   10.47 & 1.5e-14 &  1.3e-10  \cr\hline
    2 &     8 &    72 &   0.28 &   46.09 & 2.0e-14 &  9.1e-10  \cr\hline
    2 &     8 &    88 &   0.34 &   75.33 & 3.3e-15 &  5.2e-10  \cr\hline
    2 &     8 &   104 &   0.45 &  275.38 & 2.8e-14 &  6.1e-10  \cr\hline
    2 &     9 &    63 &   0.26 &   24.55 & 2.9e-15 &  3.2e-10  \cr\hline
    2 &     9 &    81 &   0.31 &   75.90 & -1.5e-16 &  3.5e-10  \cr\hline
    2 &     9 &    99 &   0.30 &  373.53 & 1.6e-14 &  3.8e-10  \cr\hline
    2 &     9 &   117 &   0.67 &  809.28 & -4.2e-14 &  5.9e-09  \cr\hline
    2 &    10 &    70 &   0.22 &   43.13 & 3.1e-14 &  1.2e-10  \cr\hline
    2 &    10 &    90 &   0.30 &  250.04 & 5.9e-16 &  3.0e-10  \cr\hline
    2 &    10 &   110 &   0.42 &  553.62 & 2.5e-14 &  4.4e-10  \cr\hline
    2 &    10 &   130 &   0.62 & 1555.54 & 3.4e-15 &  2.8e-09  \cr\hline
    3 &     8 &    56 &   0.10 &    9.55 & 4.6e-14 &  1.5e-10  \cr\hline
    3 &     8 &    72 &   0.38 &   46.60 & 2.6e-15 &  5.8e-10  \cr\hline
    3 &     8 &    88 &   0.21 &   77.41 & -3.1e-15 &  6.6e-10  \cr\hline
    3 &     8 &   104 &   0.38 &  280.12 & 1.7e-14 &  7.6e-10  \cr\hline
    3 &     9 &    63 &   0.15 &   19.97 & 3.4e-15 &  1.6e-10  \cr\hline
    3 &     9 &    81 &   0.13 &   81.98 & 7.7e-15 &  3.3e-10  \cr\hline
    3 &     9 &    99 &   0.24 &  360.76 & -2.8e-15 &  4.5e-10  \cr\hline
    3 &     9 &   117 &   0.62 &  803.72 & 4.7e-14 &  5.2e-09  \cr\hline
    3 &    10 &    70 &   0.21 &   39.18 & 5.2e-15 &  1.7e-10  \cr\hline
    3 &    10 &    90 &   0.23 &  236.75 & 9.1e-17 &  1.4e-10  \cr\hline
    3 &    10 &   110 &   0.29 &  562.02 & 3.0e-15 &  3.5e-10  \cr\hline
    3 &    10 &   130 &   0.76 & 1473.45 & 3.3e-14 &  4.7e-09  \cr\hline
\end{tabular}

\caption{Comparing ADMM with CVX Solver Sedumi}
\label{table:testingoutputsadmm}
\end{table}

We include large problems  in \Cref{table:largeprobs},
page~\pageref{table:largeprobs}. Each set has a constant number of elements $n$. Other than outliers, the times are very reasonable. 

\begin{table}[h]
\centering
\begin{tabular}{|ccc||c|c|} \hline
\multicolumn{3}{|c||}{dim/sets/size} & {Time (s)} & {rel.duality gap} \cr\hline
  $d$&   $k$&   $N$&    ADMM &    ADMM  \cr\hline
    8 &    30 &  1200 &  104.81 & -1.9e-15  \cr\hline
    8 &    30 &  1230 &   67.08 & -3.2e-14  \cr\hline
    8 &    31 &  1240 &   94.90 & -1.3e-14  \cr\hline
    8 &    31 &  1271 &   81.93 &  2.5e-14  \cr\hline
    8 &    32 &  1280 &   75.29 &  3.1e-14  \cr\hline
    8 &    32 &  1312 & 2025.42 &  1.8e-13  \cr\hline
    9 &    30 &  1200 & 4586.51 &  1.2e-13  \cr\hline
    9 &    30 &  1230 &   63.91 &  2.6e-14  \cr\hline
    9 &    31 &  1240 &   93.96 &  3.3e-14  \cr\hline
    9 &    31 &  1271 &   71.31 &  3.2e-14  \cr\hline
    9 &    32 &  1280 &   92.89 &  3.6e-14  \cr\hline
    9 &    32 &  1312 &   86.67 & -2.2e-13  \cr\hline
\end{tabular}

\caption{Large problems with \sADMM on biglinux server}
\label{table:largeprobs}
\end{table}

\subsubsection{Hidden embedding dimension}\label{sec:hiddenedim}
The embedding dimension $d$ is \emph{hidden} in our model as we only
use the distances between the $N$ points. This is why we do not see $d$ 
in \Cref{fig:sizekntime}.
The size of the \DNN model is $N+1$ irrespective of $d$.
The \emph{hardness} of the problem is often hiding in the rank of the 
\emph{optimal solution} $Y$ of the \DNN relaxation, 
i.e.,~if the rank is one, then we
have solved the original NP-hard problem. However, if the rank 
of the optimal $Y$ is large,
then the heuristics for the upper and lower bounds may not be enough to
find an optimal solution for the original NP-hard~\Cref{prob:mainWB}. 
However, the rank of the objective matrix $D$ that comes from $d$ did
not influence this at all.
In the various tests that we have done we did not see any changes in the
solution times or the ability to obtain a near zero duality gap for
wide varying values of $d$.

\begin{figure}[ht!]
\centering
\includegraphics[width=.5\textwidth]{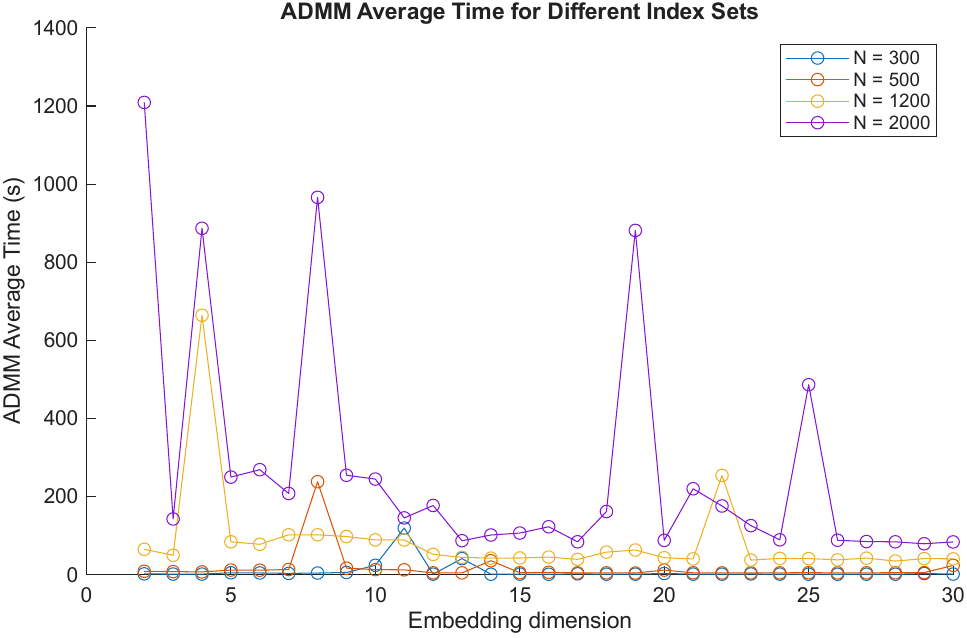}\hfill
\includegraphics[width = 0.5\textwidth]{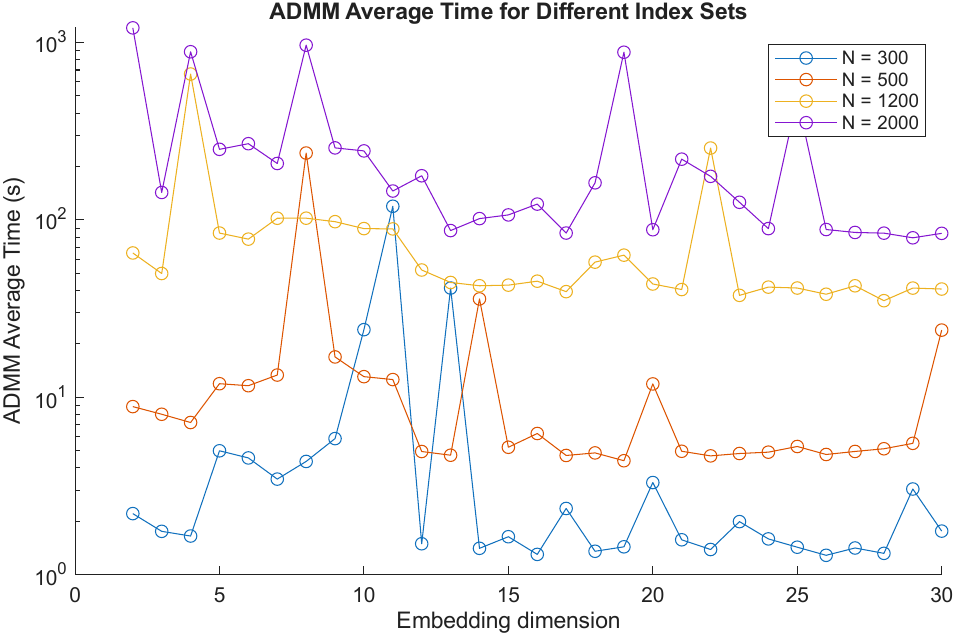}
\caption{\sADMM: Embedding dimension $d$ versus cpu time in various sizes}
\label{fig:diffedim}
\end{figure}

\Cref{fig:diffedim} in~\pageref{fig:diffedim} shows cpu time for our
$\sADMM$ method when we fix the problem size $N=nk$ and vary the
embedding dimension $d$. Random problems are generated with $(n,k)
\in\{10,40\}\times\{30,50\}$, and $d\in\{2,\dots,30\}$.
The cpu time is dependent only on problem size and not on the
embedding dimension.

		%%%%%%%%%%%%%%%%%%%%%%%%%%%%%%%%%%%%%%%%%%%%%%%%%%%%%%%%
\section{Multiple optimal solutions and duality gaps}
\label{sect:multoptgaps}
We now show that \emph{multiple optimal} solutions for the original hard problem
can lead to a duality gap between the optimal value of the
original NP-hard problem and the lower bound found from the \DNN relaxation.

\subsection{Criteria for duality gaps}
To find duality gaps for \SDP relaxations, we want to find optimal
points for the relaxation that are not vertices, i.e.,~they are
outside of the polyhedral set formed from
the convex hull of the lifted vertices. The key for this is having
multiple optimal solutions for the original problem. The following 
\Cref{lem:genbubblegap,cor:dualgapunc} provides a construction for
obtaining a positive gap between the optimal values
of a \emph{general} hard problem with multiple optimal solutions and its \DNN
relaxation.

\begin{lemma}
\label{lem:genbubblegap}
Let $\left\{x_i\right\}_{i=1}^n\subset \Rnp$ 
be a linearly independent set.
Define the lifted vertices and barycenter, respectively,
\[
\left\{X_i= x_ix_i^T\right\}_{i=1}^n \subset \Sn, \quad
\hat X:= \frac{1}{n}\sum_{i=1}^nX_i.
\]
Then $\hat x:=\sum_i x_i > 0$, and
\[
\hat X\in \Snpp \cap \Rnpp \quad (=\Int \DNN).
\]
\end{lemma}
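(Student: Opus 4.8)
The plan is to check directly that the averaged rank-one matrix $\hat X$ lands in the interior of the doubly nonnegative cone. Using the description recalled in the statement, $\Int\DNN=\Snpp\cap\Rnpp$, so there are exactly two things to establish: (i) $\hat X\succ 0$, and (ii) every entry of $\hat X$ is strictly positive. I would treat these separately, since the positive-definiteness half is essentially immediate from linear independence, while the entrywise-positivity half is where the hypothesis $\sum_i x_i>0$ must really be used.

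For (i): since $\hat X=\tfrac1n\sum_{i=1}^n x_ix_i^T$ is a nonnegative combination of rank-one positive semidefinite matrices, $\hat X\succeq 0$. For strictness, suppose $v\in\Rn$ satisfies $v^T\hat X v=0$; expanding gives $\tfrac1n\sum_{i=1}^n (x_i^Tv)^2=0$, hence $x_i^Tv=0$ for every $i$. But $\{x_i\}_{i=1}^n$ is a set of $n$ linearly independent vectors in $\Rn$, hence a basis, so $v=0$. Therefore $\hat X\in\Snpp$. (In particular the diagonal entries $(\hat X)_{pp}=\tfrac1n\sum_i(x_i)_p^2$ are positive, which one can also see directly from $\sum_i x_i>0$.)

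For (ii): the $(p,q)$ entry is $(\hat X)_{pq}=\tfrac1n\sum_{i=1}^n(x_i)_p(x_i)_q$, a sum of nonnegative terms because each $x_i\in\Rnp$, so $\hat X\ge 0$ entrywise. The remaining point, which I expect to be the main obstacle, is \emph{strict} positivity of the off-diagonal entries: $(\hat X)_{pq}>0$ requires a single index $i$ with $(x_i)_p>0$ and $(x_i)_q>0$ \emph{simultaneously}, and neither $\sum_i x_i>0$ nor linear independence delivers this by itself. I would argue by contradiction: if $(\hat X)_{pq}=0$ then rows $p$ and $q$ of $M:=[x_1,\dots,x_n]$ have disjoint supports, and I would try to combine invertibility of $M$ with $Me=\sum_i x_i>0$ (together with whatever additional structure the $x_i$ carry in the intended application) to force a linear dependence among the $x_i$, contradicting the hypothesis. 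Parts (i) and the nonnegativity half of (ii) are routine; the off-diagonal strictness is the step that carries the actual content of the lemma, and is where I would focus the care.
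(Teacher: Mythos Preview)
Your positive-definiteness argument (part (i)) is exactly the paper's: assume $\hat X v=0$, expand $v^T\hat X v=\tfrac1n\sum_i(x_i^Tv)^2=0$, conclude $v\perp x_i$ for all $i$, and invoke linear independence in $\Rn$ to get $v=0$.

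For the entrywise strict positivity (part (ii)), the paper gives no argument at all; it simply writes ``That $\hat X\in\Rnpp$ is clear from the hypothesis'' and moves on. You are right to isolate the off-diagonal entries as the real issue, and in fact your suspicion is well founded: under only the stated hypotheses the conclusion $\hat X\in\Rnpp$ is \emph{false}. Take $x_i=e_i$ (the standard unit vectors) in $\Rnp$. They are linearly independent and $\sum_i x_i=e>0$, yet
\[
\hat X=\frac1n\sum_{i=1}^n e_ie_i^T=\frac1n I,
\]
which has every off-diagonal entry equal to zero. So your proposed contradiction argument (disjoint supports of rows of $M=[x_1,\ldots,x_n]$ forcing a dependence) cannot go through in general, because invertible nonnegative matrices with two rows of disjoint support certainly exist; the identity is one. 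The paper's one-line dismissal has the same gap. What actually holds under the stated hypotheses is only $\hat X\in\Snpp\cap\Nn$; to get $\hat X>0$ entrywise one needs additional structure on the $x_i$ (as you anticipated with your parenthetical ``whatever additional structure the $x_i$ carry in the intended application'').
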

\begin{proof}
That $\hat x  > 0$ follows by contradiction.
That $\hat X\in \Rnpp$ now follows.
Now note that $X_i\succeq 0, \forall i$, and so $\hat X\succeq 0$ as well. 
Now suppose that $0=\hat X v$, for some 
$0\neq v\in \Rn$. Then 
\[
0 = v^T\hat Xv = v^T\sum_iX_i v \implies 0=v^TX_iv, \forall i \implies
(v^Tx_i)^2 = 0, \forall i \implies v = 0,
\]
by the linear independence assumption; thus contradicting $v\neq 0$ and
yielding $\hat X\succ 0$.
\end{proof}

\begin{cor}
\label{cor:dualgapunc}
Suppose that the hypotheses of~\Cref{lem:genbubblegap} hold.
Let the points $x_i, i=1,\ldots, n$ be given (multiple) optima for a given
hard minimization problem, i.e.,
\[
(P) \quad  p^* = \min\left\{ x^TQx \,:\,   x\in \{0,1\}^n\right\} =
 x_i^TQx_i, \, i=1,\ldots,n.
\]
Moreover, suppose that there exists a feasible $y$ that is not optimal for (P), 
\[
y\in \{0,1\}^n, y^TQy > p^*. 
\]
Then the \DNN relaxation has feasible points $Y=yy^T,Z$ such that 
\[
\trace YQ > p^* > \trace ZQ,
\]
i.e.,~$Z$ yields a duality gap.
\end{cor}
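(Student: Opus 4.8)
The plan is to produce the feasible point $Z$ explicitly, as a short step taken along the line through $Y=yy^T$ and the barycenter $\hat X=\tfrac{1}{n}\sum_i x_ix_i^T$, continued just past $\hat X$. First I would record the two feasible points and their objective values. Each $x_i$ is an optimum of $(P)$ and $y$ is feasible, so $x_1,\dots,x_n,y\in\{0,1\}^n$; hence the rank-one lifts $x_ix_i^T$ and $yy^T$ are lifted feasible points of the \DNN relaxation, and so is their convex combination $\hat X$ (the feasible set being convex). The objective values are $\trace(yy^TQ)=y^TQy$ and, since every $x_i$ is optimal, $\trace(\hat X Q)=\tfrac{1}{n}\sum_i x_i^TQx_i=p^*$. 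In particular $\hat X\neq yy^T$, their objective values $p^*$ and $y^TQy>p^*$ being different; this already yields $\trace YQ=y^TQy>p^*$.

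Next I would invoke \Cref{lem:genbubblegap}, whose hypotheses are exactly those assumed in the corollary, to conclude $\hat X\in\Int\DNN=\Snpp\cap\Rnpp$. For $\varepsilon>0$ set
\[
Z=Z_\varepsilon:=(1+\varepsilon)\hat X-\varepsilon\,yy^T=\hat X+\varepsilon\bigl(\hat X-yy^T\bigr).
\]
Because $\hat X$ lies in the open set $\Int\DNN$ and $Z_\varepsilon\to\hat X$ as $\varepsilon\downarrow0$, we get $Z_\varepsilon\in\DNN$ for all sufficiently small $\varepsilon>0$. The remaining constraints defining the relaxation are affine equalities satisfied by both $\hat X$ and $yy^T$, and the combining coefficients $1+\varepsilon$ and $-\varepsilon$ sum to $1$, so $Z_\varepsilon$ satisfies them too; hence $Z_\varepsilon$ is feasible. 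Finally
\[
\trace(Z_\varepsilon Q)=(1+\varepsilon)\trace(\hat X Q)-\varepsilon\trace(yy^TQ)=p^*-\varepsilon\bigl(y^TQy-p^*\bigr)<p^*
\]
since $y^TQy>p^*$. Fixing any admissible $\varepsilon$ and renaming $Z:=Z_\varepsilon$ gives $\trace YQ>p^*>\trace ZQ$; hence the optimal value of the \DNN relaxation is at most $\trace ZQ<p^*$, i.e.\ strictly below the optimal value $p^*$ of $(P)$: a duality gap, certified by $Z$.

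The value computations and the affine-equality check are routine; the step needing care is confirming that $Z_\varepsilon$ stays feasible. Membership in $\DNN$ is supplied cleanly by the interiority in \Cref{lem:genbubblegap}, and the linear equalities survive precisely because the combining coefficients sum to $1$ --- which is exactly why a segment through an interior point can be prolonged a little beyond it. The one delicacy, when specializing to a concrete model, is that $\hat X$ should lie in the relative interior of the \emph{full} feasible set, not merely of the \DNN cone: if the relaxation carries extra polyhedral inequalities (say entrywise bounds $Y\le ee^T$), these should be redundant, or at least active at $\hat X$ only where they are also active at $yy^T$. For the conic-plus-linear-equality form used here, $\hat X\in\Int\DNN$ is all that is needed.
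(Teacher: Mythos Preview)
Your proof is correct and follows essentially the same line as the paper's own argument: construct $Z_\varepsilon=\hat X+\varepsilon(\hat X-yy^T)$, invoke the interiority of $\hat X$ from \Cref{lem:genbubblegap} to secure \DNN feasibility for small $\varepsilon$, and compute that the objective value drops strictly below $p^*$. Your write-up is in fact more careful than the paper's in spelling out why the affine constraints survive (coefficients summing to $1$) and in flagging the caveat about additional polyhedral inequalities.
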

\begin{proof}
From~\Cref{lem:genbubblegap} we have that the barycenter
$\hat X$ of the $\{x_i\}$ satisfies $\hat X\in \Int \DNN$.
Note that $\trace YQ = y^TQy > p^* = \trace \hat XQ$. Therefore, 
$\trace (\hat X - Y)Q<0$, and for $\epsilon > 0$,
\[
\trace (\hat X + \epsilon (\hat X-Y)) Q 
= p^* + \epsilon\trace (\hat X-Y)Q < p^*.
\]
Moreover, the line segment $[Y,\hat X + \epsilon (\hat X-Y)]$ is
feasible for the \SDP relaxation for small enough $\epsilon > 0$ by
$\hat X \in \Int \DNNp$.
Therefore, we set $Z_\epsilon= \hat X + \epsilon (\hat X-Y), 0<\epsilon <<1$ 
and obtain a duality gap for any such $Z=Z_\epsilon$.
\end{proof}

We can extend this theory to problems with general linear
constraints $Ax=b$ by using \FRp. We now specifically extend it to 
our \BCQP in \cref{eq:BCQP}. After \FR, we need $nk+1-k$ linearly independent
optimal points. This can be obtained when we choose $k >> n$.
Recall the matrix $K$ in \cref{eq:matK} used for facial reduction and the 
facially reduced \DNN relaxation in~\cref{eq:DNNrelax}.

\begin{cor}
\label{cor:dualgapWass}
We consider the \BCQP in \cref{eq:BCQP} with optimal value $p^*$,
and the \DNN relaxation in~\cref{eq:DNNrelax}. Let 
\[
\left\{ y_i=\begin{pmatrix}1\cr x_i\end{pmatrix}
\right\}_{i=1}^{nk+1-k} \subset \R_+^{nk+1}
\] 
be a linearly independent set that are optimal for \BCQP
and with $\sum_i y_i>0$.
Define the lifted vertices and barycenter, respectively,
\[
\left\{Y_i= y_iy_i^T\right\}_i, \forall i, \quad
\hat Y:= \frac{1}{nk+1-k}\sum_{i=1}^{nk+1-k}Y_i.
\]
Moreover, suppose that there exists a feasible $\bar x$ for \BCQP that
is not optimal. Then 
\[
\hat Y = V\hat RV^T \succeq 0, \, \hat Y > 0,  \, \hat R \succ 0.
\]
And there exists $Z=VR_ZV^T, R_Z\succ 0$ with optimal value $\trace DZ <
p^*$, yielding a duality gap.
\end{cor}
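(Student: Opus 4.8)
The plan is to run the argument of \Cref{lem:genbubblegap,cor:dualgapunc} inside the facially reduced coordinates determined by the facial vector $V$ of \cref{eq:facialvector}, where the reduced cone is $\cR=\{R\succeq 0:\trace R=k+1\}$ from \cref{eq:setYR}. First I would reduce every relevant lifted point: since each optimal $y_i=(1;x_i)$ is feasible for \cref{eq:BCQP}, \cref{eq:lineqnAK} gives $K\,y_iy_i^T=0$, hence $Ky_i=0$ (as $y_i\neq 0$) and $y_i\in\nul(K)=\range(V)$; writing $w_i:=V^Ty_i$ we get $Y_i:=y_iy_i^T=Vw_iw_i^TV^T$, with $w_iw_i^T\in\cR$ by the trace identity established just above \cref{eq:setYR}, and $Y_i\in\cY$ from the binary/linear feasibility of $x_i$. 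The same reduction applies to the non-optimal feasible $\bar x$, producing $\bar y=(1;\bar x)$, $\bar w=V^T\bar y$, $\bar Y=V\bar w\bar w^TV^T\in\cY$ with $\bar w\bar w^T\in\cR$.

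Next I would show the reduced barycenter is strictly interior. Because $V$ has full column rank $nk+1-k$ and $V^TV=I$, the map $V^T$ is injective on $\range(V)$; the $nk+1-k$ linearly independent vectors $y_i$ span $\range(V)=\nul(K)$, which has dimension exactly $nk+1-k$, so $\{w_i\}$ is a basis of $\R^{nk+1-k}$. Applying the positive-definiteness step in the proof of \Cref{lem:genbubblegap} to $\{w_i\}$ then gives $\hat R\succ 0$ for the (normalized) barycenter $\hat R$ of the matrices $w_iw_i^T$, and hence $\hat Y:=V\hat RV^T$, the barycenter of the $nk+1-k$ lifted optima $Y_i$, satisfies $\hat Y=V\hat RV^T$, $\hat Y\ge 0$; invoking \Cref{lem:genbubblegap} in the lifted coordinates upgrades this to $\hat Y$ lying in the relative interior of $\cY$ (strictly positive on every entry not pinned to $0$ by the gangster index $\hat{\cJ}$ — the precise meaning of $\hat Y>0$ here — and equal to $1$ only on the arrow-forced $(0,0)$ entry). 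This yields the first display of the statement. For the objective values, $\hat D=\bmat{0&0\\0&D}$ and $y_i=(1;x_i)$ give $\iprod{\hat D}{Y_i}=x_i^TDx_i=p^*$ and hence $\iprod{\hat D}{\hat Y}=p^*$, while $\iprod{\hat D}{\bar Y}=\bar x^TD\bar x>p^*$ since $\bar x$ is feasible but not optimal; so $\iprod{\hat D}{\hat Y-\bar Y}<0$.

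Finally I would mimic \Cref{cor:dualgapunc}: set $Z:=\hat Y+\epsilon(\hat Y-\bar Y)=VR_ZV^T$ with $R_Z:=\hat R+\epsilon(\hat R-\bar R)$ for $0<\epsilon\ll 1$. Then $R_Z\succ 0$ (since $\hat R\succ 0$) and $\trace R_Z=k+1$ (since $\trace\hat R=\trace\bar R=k+1$), so $R_Z\in\cR$; $Z$ obeys the linear constraints of $\cY$ as an affine combination of points of $\cY$; and the box constraint $0\le Z\le 1$ survives for small $\epsilon$ precisely because $\hat Y$ is relative-interior to $\cY$. Hence $Z$ is feasible for \cref{eq:DNNrelax} with $\iprod{\hat D}{Z}=p^*+\epsilon\iprod{\hat D}{\hat Y-\bar Y}<p^*$, so $p^*_{\DNN}<p^*$, a duality gap. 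The hard part is the strict-interiority claim in the middle step together with the companion box-feasibility check: one must ensure the perturbation $\hat Y\mapsto\hat Y+\epsilon(\hat Y-\bar Y)$ keeps all free entries in $(0,1)$ and leaves the gangster-zero entries at $0$ (both feasible $Y$'s vanish there), which is exactly what having $nk+1-k=\dim\nul(K)$ linearly independent optima delivers — it forces $\hat R$ to have \emph{full} rank, the reduced-space analogue of $\hat X\in\Int\DNN$ in \Cref{lem:genbubblegap}. This is also why the hypothesis is only attainable when $k\gg n$, as noted before the statement.
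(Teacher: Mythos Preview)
Your proposal is correct and follows essentially the same route as the paper: pass to the facially reduced coordinates via $V$, invoke the positive-definiteness argument of \Cref{lem:genbubblegap} on the reduced vectors $w_i=V^Ty_i$ to get $\hat R\succ 0$, and then replay the line-segment perturbation of \Cref{cor:dualgapunc} inside $\cY\times\cR$, noting that the arrow and gangster equalities are preserved along affine combinations of feasible points. Your write-up is in fact more explicit than the paper's own proof, particularly in spelling out why $\{w_i\}$ is a basis of $\R^{nk+1-k}$ and in flagging that ``$\hat Y>0$'' must be read as relative-interior to $\cY$ (the gangster entries stay at zero).
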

\begin{proof}
First note that incident vectors are feasible for the linear 
constraints and this guarantees that we have enough
feasible points to guarantee that the barycenter satisfies $\hat Y>0$.
All lifted feasible points of the relaxation are in  the minimal face
and have a corresponding matrix $R\in\mathbb{S}^{nk+1-k}$ for the facial reduction $Y=VRV^T$.
Since $R\succ 0$ after the \FRp, we can apply the same proof as
in~\Cref{cor:dualgapunc}. In addition, note that the linear constraints,
the arrow constraint and gangster constraints, remain satisfied in the
line formed from any two feasible points.
\end{proof}

\subsection{Wheel of wheels examples with a duality gap}
We illustrate the above theory with some specific problems
with special structure that have multiple optimal
solutions for the original $\NP$-hard problem. We see that a duality
gap can exist between the optimal value of the original $\NP$-hard problem and
the optimal value of the \DNN relaxation.

%%%%%%%%%%%%%%%%%%%%%%%%%%%%%%%%%%%%%%%%%%%%%%%%%%%%%%%%
\begin{example}[Wheel of wheels]
		 We next present another input data distribution for
which the duality gap between the optimal value of the \BCQP formulation
and the Lagrangian dual value is non-trivial. The issue is again the
non-uniqueness of the optimal solutions and the $\sADMM$ algorithm fails to break ties among them.
		
The data distributions compose of a wheel of wheels,
i.e.,~a wheel with an odd
number of sets each of which is a wheel. Hence we call it an odd wheel. Given problem size parameters $(k, n, d)$, define 
		\begin{itemize}
			\item $\theta_k:=\frac{2\pi}{k}$. 
			\item a set of $k$ centroids encoded by a matrix $C\in\R^{k\times 2}$ such that 
				\[C(i,:) = \matrixOneTwo{\cos(i\theta_k-\theta_k)}{\sin(i\theta_k-\theta_k)}, i=1,...,k.\] 
			\item the radius of each cluster $r_k:=\frac{\sqrt{\cos(\theta_k-1)^2+\sin\theta_k^2}}{4}$. 
			\item the set of input points encoded by a matrix $P:=(C\otimes e_k) + r_k(e_k\otimes C)\in\R^{k^2\times 2}$.
		\end{itemize}
		
		When $k$ is odd, there exists more than one optimal
solution. A simple example with $k=3=n$ follows 
in~\Cref{fig:ce1}, page~\pageref{fig:ce1}.
We use the corresponding nine points in the configuration matrix ordered
$1-9$ counter-clockwise in the triangles ordered counter-clockwise.
 \[
P=
\left[
\begin{array}{cc} 1.7536 & 0.0137\\ 0.6195 & 0.6362\\ 0.6239 & -0.6643\\ 0.2590 & 0.8609\\ -0.8839 & 1.5449\\ -0.8692 & 0.2201\\ 0.2629 & -0.8740\\ -0.8937 & -0.2100\\ -0.8721 & -1.5275 \end{array}
\right]
\]
The distances are ordered by choosing the points in lexicographic order:
\[
(1,1), (1,2), \ldots  (3,3).
\]
The unique minimum distance is $11.1607$ obtained from the points
$2,3,2$ and with the primal optimal $x^* = \begin{pmatrix} 
0 & 1 & 0 & 0 &0 & 1  & 0 & 1 & 0 \end{pmatrix}^T$.
The optimal value from CVX to $9$ decimals precision is $10.8246$, thus
verifying an empirical duality gap of $.3$ to $9$ decimals precision.
The motivation for this counter-example is to have near optimal
solutions. One could shrink triangle two to make point $4$ equidistant
to points $7,8$ and move point $2$ closer to point $3$ 
and thus have a tie optimal solution.\footnote{The wheel graph was used
successfully to obtain duality gaps for the second lifting of the
max-cut problem, see~\cite{AnWo:00}.} Note that the maximum distance is $56.0227$ obtained from points $(1,2,3)$.

\begin{figure}[h]
    \centering
	\begin{tikzpicture}

	\tikzset{
		dot/.style={fill,circle,inner sep=1.5pt, color=OliveGreen},
		x_mark/.style={red,cross out,draw,thin,inner sep=1.5pt},
		red_circle/.style={draw=red,circle,thick,inner sep=2.5pt}
		}

	\begin{axis}[
		axis lines=middle,
		xmin=-2.0, xmax=2.0,
		ymin=-2.0, ymax=2.0,
		xtick={-2, -1.5, -1, -0.5, 0, 0.5, 1, 1.5, 2},
		ytick={-2, -1.5, -1, -0.5, 0, 0.5, 1, 1.5, 2},
		grid=major,
		grid style={thin,gray!20},
		xticklabel style={font=\small},
		yticklabel style={font=\small},
		width=10cm,
		height=10cm,
		enlarge x limits=false,
		enlarge y limits=false
	]

	\draw[blue,thin] (1.7536, 0.0137) -- (0.6195, 0.6362) -- (0.6239, -0.6643) -- cycle;
	\node[dot,label={[node distance=1pt]above:(1,1)}] at (1.7536, 0.0137) {};
	\node[dot,label={[node distance=1pt]above right:(1,2)}] at (0.6195, 0.6362) {};
	\node[red_circle] at (0.6195, 0.6362) {};
	\node[dot,label={[node distance=1pt]below:(1,3)}] at (0.6239, -0.6643) {};
	\node[x_mark] at (0.999, -0.0048) {};

	\draw[blue,thin] (0.2590, 0.8609) -- (-0.8839, 1.5449) -- (-0.8692, 0.2201) -- cycle;
	\node[dot,label={[node distance=1pt]above right:(2,1)}] at (0.2590, 0.8609) {};
	\node[dot,label={[node distance=1pt]above:(2,2)}] at (-0.8839, 1.5449) {};
	\node[dot,label={[node distance=1pt]left:(2,3)}] at (-0.8692, 0.2201) {};
	\node[red_circle] at (-0.8692, 0.2201) {};
	\node[x_mark] at (-0.498, 0.8753) {}; 

	\draw[blue,thin] (0.2629, -0.8740) -- (-0.8937, -0.2100) -- (-0.8721, -1.5275) -- cycle;
	\node[dot,label={[node distance=1pt]below:(3,1)}] at (0.2629, -0.8740) {};
	\node[dot,label={[node distance=1pt]below left:(3,2)}] at (-0.8937, -0.2100) {};
	\node[red_circle] at (-0.8937, -0.2100) {};
	\node[dot,label={[node distance=1pt]below:(3,3)}] at (-0.8721, -1.5275) {};
	\node[x_mark] at (-0.5009, -0.8705) {};

	\end{axis}
\end{tikzpicture}
    \caption{Duality gap for wheel of wheels: k=3=n}
    \label{fig:ce1}
\end{figure}
However, when $k$ is even, only one optimal solution
clearly exists and the duality gap becomes trivial. An example with $k=6=n$ 
follows in~\Cref{fig:ce2}, page~\pageref{fig:ce2}.
		\begin{figure}[ht!]
			\centering
			\includegraphics[width=15cm]{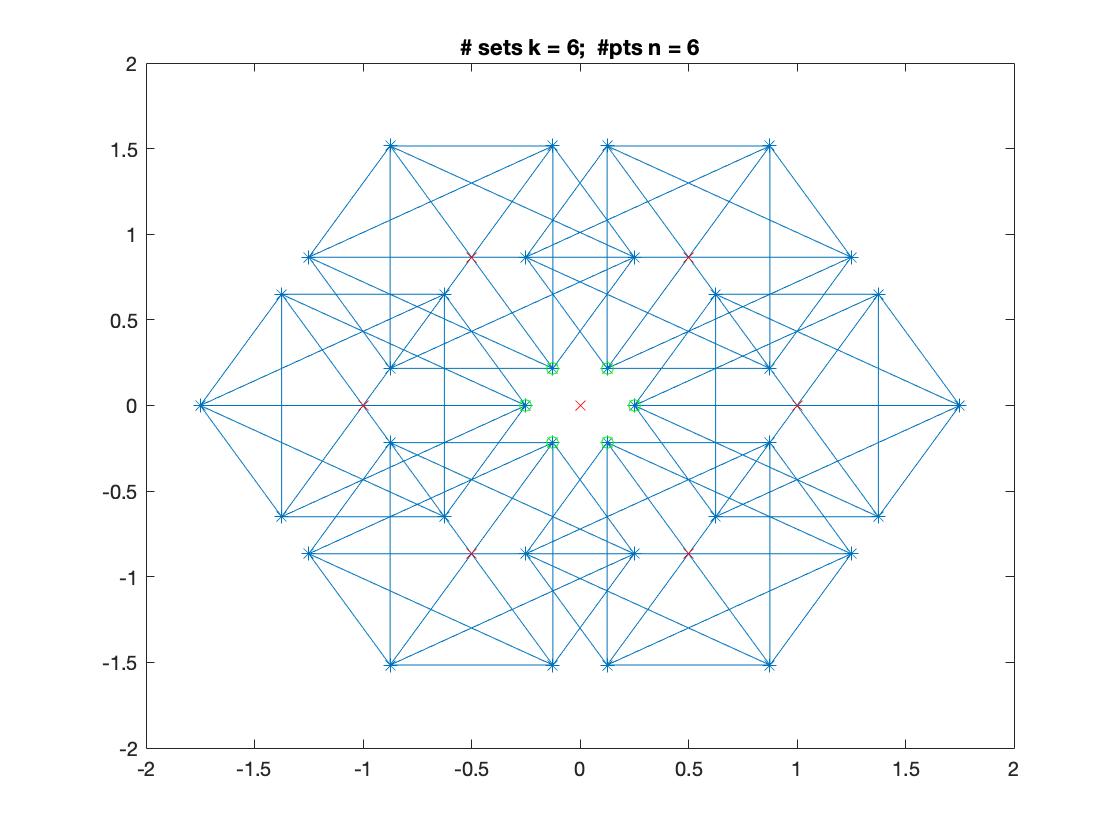}
			\caption{No duality gap for wheel of wheels: k=6=n} 
			\label{fig:ce2}
		\end{figure}
\end{example}
%%%%%%%%%%%%%%%%%%%%%%%%%%%%%%%%%%%%%%%%%%%%%%%%%%%%%%%%%

	%======================================================================
	\section{Conclusion}
\label{sect:concl}
 In this paper we presented a strategy for solving a class
of $\NP$-hard binary
quadratic problems, the simple Wasserstein barycenter problem. 
This involves formulating a \DNN relaxation, 
applying \FR that gives rise to a natural splitting for a 
symmetric alternating directions method of multipliers 
 \sADMM with intermediate update of multipliers
and strong upper and lower bounding techniques. In particular, the
structure of both the primal and dual solutions is exploited in the
updating steps of the \sADMMp.
We applied this to
the $\NP$-hard computational
problem called the simplified Wasserstein barycenter problem.

Surprisingly, for the random problems we generated the gap between
bounds was zero to machine precision
and we were able to provably solve the original $\NP$-hard
optimization problem, i.e.,~if there were another optimal solution, then
it would yield the same optimal value to machine precision.
This coincided with $\rank(Y^*) = 1$ for the optimal solution
found for the \DNN relaxation. We observed that the embedding dimension
$d$ is hidden in the \DNN relaxation.
However, for specially constructed input
data that has near multiple optimal solutions, the algorithm had difficulty
breaking ties and resulted in nontrivial gaps between lower and upper
bounds coinciding with $\rank(Y^*)>1$,
i.e.,~the original Wasserstein problem was not solved to
optimality.

	As for future research, we want to better understand the theoretical
reasons for the positive duality gaps and find more classes of problems
where this occurs. Does the lack of gaps correspond to large volumes for
the normal cones at points on the boundary of the feasible set?
In addition, we want to understand what happens under
small perturbations to problems with duality gaps, i.e.,~if the gaps can
be closed with perturbations.

Finally, we are gathering data about airports in North America by state
and province in
URL:
\href{https://ourairports.com/continents/NA/}
{ourairports.com/continents/NA/}. 
We plan on solving the problem of
finding the best hub in each state (or province) in order to find the
location for the best hub for the country.

{\bf Acknowledgment} The authors would like to thank Jiahui (Jeffrey) Cheng for
his contributions to the work in this paper. This joint research
continued from the work jointly started that resulted
in Mr Cheng's Master's thesis~\cite{JeffCheng:23}. 
The content and codes in this paper have changed.
The authors would also like to thank
Walaa M. Moursi for her contributions as a joint supervisor for Jeffrey
Cheng and for her work in an early version of this paper.

\cleardoublepage
\phantomsection
 \addcontentsline{toc}{section}{Index}
 \printindex
 \label{ind:index}

\cleardoublepage
\phantomsection
\addcontentsline{toc}{section}{Bibliography}
\bibliographystyle{siamplain}
%\bibliography{master.bib,psd.bib,edm.bib,bjorBOOK.bib,davidt.bib,leo.bib}
\bibliography{.master,.edm,.psd,.bjorBOOK,jeff}
\label{bib:bibl}

\end{document}